\newtheorem{theorem}{Theorem}
\newtheorem{lem}[theorem]{Lemma}
\newtheorem{cor}[theorem]{Corollary}
\newtheorem{proposition}[theorem]{Proposition}
\newtheorem{definition}[theorem]{Definition}
\newtheorem{prob}[theorem]{Problem}
\newtheorem{example}[theorem]{Example}
\newcommand{\eps}{\varepsilon}
\newcommand{\supcn}{\mbox{$\sup {\rm CN}$}}
\newcommand{\supCN}{\mbox{$\sup {\rm CN}$}}
\newcommand{\supccn}{\mbox{$\sup {\rm cCN}$}}
\newcommand{\supC}{\mbox{$\sup {\rm C}$}}
\newcommand{\supCC}{\mbox{$\sup {\rm cC}$}}
\title{\LARGE \bf Maximally Permissive Coordination Supervisory Control -- Towards Necessary and Sufficient Conditions}
\author{Jan Komenda, Tom{\' a}{\v s} Masopust, and Jan H. van Schuppen
  \thanks{J. Komenda and T. Masopust are affiliated with the Institute of Mathematics,
          Academy of Sciences of the Czech Republic,
          {\v Z}i{\v z}kova 22, 616 62 Brno, Czech Republic.
          T. Masopust is also affiliated with the Faculty of Computer Science, TU Dresden, Germany.
          {\tt\small komenda@math.cas.cz, masopust@math.cas.cz}
  }%
  \thanks{J. H. van Schuppen is affiliated with
          Van Schuppen Control Research,
          Gouden Leeuw 143, 1103 KB, Amsterdam, The Netherlands. \newline
          {\tt\small jan.h.van.schuppen@xs4all.nl}
  }%
}
\begin{document}

\maketitle
\thispagestyle{empty}
\pagestyle{empty}

%%%%%%%%%%%%%%%%%%%%%%%%%%%%%%%%%%%%%%%%%%%%%%%%%%%%%%%%%%%%%%%%%%%%%%%%%%%%%%%%
\begin{abstract}
  In this paper, we further develop the coordination control framework for discrete-event systems with both complete and partial observation. A new weaker sufficient condition for the computation of the supremal conditionally controllable sublanguage is presented. This result is then used for the computation of the supremal conditionally controllable and conditionally normal sublanguage. The paper further generalizes the previous study by considering general, non-prefix-closed languages.
\end{abstract} 

%%%%%%%%%%%%%%%%%%%%%%%%%%%%%%%%%%%%%%%%%%%%%%%%%%%%%%%%%%%%%%%%%%%%%%%%%%%%%%%%
\section{Introduction}\label{intro}
  Large scale discrete-event systems (DES) are often formed in a compositional way as a synchronous or asynchronous composition of smaller components, typically automata (or 1-safe Petri nets that can be viewed as products of automata). Supervisory control theory was proposed in~\cite{RW89} for automata as a formal approach that aims to solve the safety issue and nonblockingness. 
  
  A major issue is the computational complexity of the centralized supervisory control design, because the global system has an exponential number of states in the number of components. Therefore, a modular supervisory control of DES based on a compositional (local) control synthesis has been introduced and developed by many authors. Structural conditions have been derived for the local control synthesis to equal the global control synthesis in the case of both local and global specification languages. 
  
  Specifications are mostly defined over the global alphabet, which means that the global specifications are more relevant than the local specifications. However, several restrictive conditions have to be imposed on the modular plant such as mutual controllability (and normality) of local plant languages for maximal permissiveness of modular control, and other conditions are required for nonblockingness.
 
  For that reason, a coordination control approach was proposed for modular DES in~\cite{KvS08} and further developed in~\cite{automatica2011}. Coordination control can be seen as a reasonable trade-off between a purely modular control synthesis, which is in some cases unrealistic, and a global control synthesis, which is naturally prohibitive for high complexity reasons. The concept of a coordinator is useful for both safety and nonblockingness. The complete supervisor then consists of the coordinator, its supervisor, and the local supervisors for the subsystems. In~\cite{KvS08}, necessary and sufficient conditions are formulated for nonblockingness and safety, and a sufficient condition is formulated for the maximally permissive control synthesis satisfying a global specification using a coordinator. Later, in~\cite{automatica2011}, a procedure for a distributive computation of the supremal conditionally controllable sublanguage of a given specification has been proposed. We have extended coordination control for non-prefix-closed specification languages in~\cite{JDEDS} and for partial observations in~\cite{SCL}. 

  In this paper, we first propose a new sufficient condition for a distributive computation of the supremal conditionally controllable sublanguages. We show that it generalizes (is weaker than) both conditions we have introduced earlier in~\cite{JDEDS} and~\cite{automatica2011}. Then we revise (simplify) the concepts of conditional observability and conditional normality and present new sufficient conditions for a distributive computation of the supremal conditionally controllable and conditionally normal sublanguage.

  The paper is organized as follows. The next section recalls the basic concepts from the algebraic language theory that are needed in this paper. Our coordination control framework is briefly recalled in Section~\ref{sec:concepts}. In Section~\ref{sec:cc}, new results in coordination control with complete observations are presented: a new, weaker, sufficient condition for distributed computation of supremal conditionally controllable sublanguages. Section~\ref{sec:co} is dedicated to coordination control with partial observations, where the main concepts are simplified. Concluding remarks are in Section~\ref{conclusion}.

%%%%%%%%%%%%%%%%%%%%%%%%%%%%%%%%%%%%%%%%%%%%%%%%%%%%%%%%%%%%%%%%%%%%%%%%%%%%%%%%
\section{Preliminaries}\label{preliminaries}
  We now briefly recall the elements of supervisory control theory. The reader is referred to~\cite{CL08} for more details. Let $\Sigma$ be a finite nonempty set of {\em events}, and let $\Sigma^*$ denote the set of all finite words (strings) over $\Sigma$. The {\em empty word\/} is denoted by $\eps$. Let $|\Sigma|$ denote the cardinality of $\Sigma$.

  A {\em generator\/} is a quintuple $G=(Q,\Sigma, f, q_0, Q_m)$, where $Q$ is a finite nonempty set of {\em states}, $\Sigma$ is an {\em event set\/}, $f: Q \times \Sigma \to Q$ is a {\em partial transition function}, $q_0 \in Q$ is the {\em initial state}, and $Q_m\subseteq Q$ is the set of {\em marked states}. In the usual way, the transition function $f$ can be extended to the domain $Q \times \Sigma^*$ by induction. The behavior of $G$ is described in terms of languages. The language {\em generated\/} by $G$ is the set $L(G) = \{s\in \Sigma^* \mid f(q_0,s)\in Q\}$ and the language {\em marked\/} by $G$ is the set $L_m(G) = \{s\in \Sigma^* \mid f(q_0,s)\in Q_m\}\subseteq L(G)$.

  A {\em (regular) language\/} $L$ over an event set $\Sigma$ is a set $L\subseteq \Sigma^*$ such that there exists a generator $G$ with $L_m(G)=L$. The prefix closure of $L$ is the set $\overline{L}=\{w\in \Sigma^* \mid \text{there exists } u \in\Sigma^* \text{ such that } wu\in L\}$; $L$ is {\em prefix-closed\/} if $L=\overline{L}$.

  A {\em (natural) projection} $P: \Sigma^* \to \Sigma_o^*$, for some $\Sigma_o\subseteq \Sigma$, is a homomorphism defined so that $P(a)=\eps$, for $a\in \Sigma\setminus \Sigma_o$, and $P(a)=a$, for $a\in \Sigma_o$. The {\em inverse image} of $P$, denoted by $P^{-1} : \Sigma_o^* \to 2^{\Sigma^*}$, is defined as $P^{-1}(s)=\{w\in \Sigma^* \mid P(w) = s\}$. The definitions can naturally be extended to languages. The projection of a generator $G$ is a generator $P(G)$ whose behavior satisfies $L(P(G))=P(L(G))$ and $L_m(P(G))=P(L_m(G))$.

  A {\em controlled generator\/} is a structure $(G,\Sigma_c,P,\Gamma)$, where $G$ is a generator over $\Sigma$, $\Sigma_c \subseteq \Sigma$ is the set of {\em controllable events}, $\Sigma_{u} = \Sigma \setminus \Sigma_c$ is the set of {\em uncontrollable events}, $P:\Sigma^*\to \Sigma_o^*$ is the projection, and $\Gamma = \{\gamma \subseteq \Sigma \mid \Sigma_{u}\subseteq\gamma\}$ is the {\em set of control patterns}. A {\em supervisor\/} for the controlled generator $(G,\Sigma_c,P,\Gamma)$ is a map $S:P(L(G)) \to \Gamma$. A {\em closed-loop system\/} associated with the controlled generator $(G,\Sigma_c,P,\Gamma)$ and the supervisor $S$ is defined as the smallest language $L(S/G) \subseteq \Sigma^*$ such that (i) $\eps \in L(S/G)$ and (ii) if $s \in L(S/G)$, $sa\in L(G)$, and $a \in S(P(s))$, then $sa \in L(S/G)$. The marked behavior of the closed-loop system is defined as $L_m(S/G)=L(S/G)\cap L_m(G)$.
  
  Let $G$ be a generator over $\Sigma$, and let $K\subseteq L_m(G)$ be a specification. The aim of supervisory control theory is to find a nonblocking supervisor $S$ such that $L_m(S/G)=K$. The nonblockingness means that $\overline{L_m(S/G)} = L(S/G)$, hence $L(S/G)=\overline{K}$. It is known that such a supervisor exists if and only if $K$ is (i) {\em controllable\/} with respect to $L(G)$ and $\Sigma_u$, that is $\overline{K}\Sigma_u\cap L\subseteq \overline{K}$, (ii) {\em $L_m(G)$-closed}, that is $K = \overline{K}\cap L_m(G)$, and (iii) {\em observable\/} with respect to $L(G)$, $\Sigma_o$, and $\Sigma_c$, that is for all $s\in \overline{K}$ and $\sigma \in \Sigma_c$, $(s\sigma \notin \overline{K})$ and $(s\sigma \in L(G))$ imply that $P^{-1}[P(s)]\sigma \cap \overline{K} = \emptyset$, where $P:\Sigma^*\to \Sigma_o^*$, cf.~\cite{CL08}. 
  
  The synchronous product (parallel composition) of languages $L_1\subseteq \Sigma_1^*$ and $L_2\subseteq \Sigma_2^*$ is defined by $L_1\| L_2=P_1^{-1}(L_1) \cap P_2^{-1}(L_2) \subseteq \Sigma^*$, where $P_i: \Sigma^*\to \Sigma_i^*$, for $i=1,2$, are projections to local event sets. In terms of generators, see~\cite{CL08} for more details, it is known that $L(G_1 \| G_2) = L(G_1) \| L(G_2)$ and $L_m(G_1 \| G_2)= L_m(G_1) \| L_m(G_2)$.

%%%%%%%%%%%%%%%%%%%%%%%%%%%%%%%%%%%%%%%%%%%%%%%%%%%%%%%%%%%%%%%%%%%%%%%%%%%%%%%%
\section{Coordination Control Framework}\label{sec:concepts}
  A language $K\subseteq (\Sigma_1\cup \Sigma_2)^*$ is {\em conditionally decomposable\/} with respect to event sets $\Sigma_1$, $\Sigma_2$, and $\Sigma_k$, where $\Sigma_1\cap \Sigma_2\subseteq \Sigma_k$, if $K = P_{1+k} (K)\parallel P_{2+k} (K)$, where $P_{i+k}: (\Sigma_1\cup \Sigma_2)^*\to (\Sigma_i\cup \Sigma_k)^*$ is a projection, for $i=1,2$. Note that $\Sigma_k$ can always be extended so that the language $K$ becomes conditionally decomposable. A polynomial algorithm how to compute an extension can be found in~\cite{SCL12}. However, to find the minimal extension is NP-hard~\cite{JDEDS}.

  Now we recall the coordination control problem that is further developed in this paper.
  \begin{prob}\label{problem:controlsynthesis}
    Consider generators $G_1$ and $G_2$ over $\Sigma_1$ and $\Sigma_2$, respectively, and a generator $G_k$ (called a {\em coordinator\/}) over $\Sigma_k$ with $\Sigma_1\cap\Sigma_2\subseteq \Sigma_k$. Assume that a specification $K \subseteq L_m(G_1 \| G_2 \| G_k)$ and its prefix-closure $\overline{K}$ are conditionally decomposable with respect to event sets $\Sigma_1$, $\Sigma_2$, and $\Sigma_k$. The aim of coordination control is to determine nonblocking supervisors $S_1$, $S_2$, and $S_k$ for respective generators such that 
    \begin{align*}
      L_m(S_k/G_k)\subseteq P_k(K) && \& && L_m(S_i/ [G_i \| (S_k/G_k) ])\subseteq P_{i+k}(K)\,,
    \end{align*}
    for $i=1,2$, and
    \begin{flalign*}
      && L_m(S_1/ [G_1 \parallel (S_k/G_k) ]) \parallel L_m(S_2/ [G_2 \parallel (S_k/G_k) ]) & = K\,. && \hfill\diamond
    \end{flalign*}
  \end{prob}
  
  \medskip
  Recall that one way how to construct a coordinator is to set $G_k=P_k(G_1)\parallel P_k(G_2)$, cf.~\cite{automatica2011,JDEDS}.

%%%%%%%%%%%%%%%%%%%%%%%%%%%%%%%%%%%%%%%%%%%%%%%%%%%%%%%%%%%%%%%%%%%%%%%%%%%%%%%%
\section{Coordination Control with Complete Observations}\label{sec:cc}
  Conditional controllability introduced in~\cite{KvS08} and further studied in~\cite{ifacwc2011,SCL,automatica2011,JDEDS} plays the central role in coordination control. In what follows, we use the notation $\Sigma_{i,u}=\Sigma_i\cap \Sigma_u$ to denote the set of uncontrollable events of the event set $\Sigma_i$.

  \begin{definition}[Conditional controllability]\label{def:conditionalcontrollability}
    Let $G_1$ and $G_2$ be generators over $\Sigma_1$ and $\Sigma_2$, respectively, and let $G_k$ be a coordinator over $\Sigma_k$. A language $K\subseteq L_m(G_1\| G_2\| G_k)$ is {\em conditionally controllable\/} with respect to generators $G_1$, $G_2$, $G_k$ and uncontrollable event sets $\Sigma_{1,u}$, $\Sigma_{2,u}$, $\Sigma_{k,u}$ if
    \begin{enumerate}
      \item $P_k(K)$ is controllable with respect to $L(G_k)$ and $\Sigma_{k,u}$,
      \item $P_{1+k}(K)$ is controllable with respect to $L(G_1) \parallel \overline{P_k(K)}$ and $\Sigma_{1+k,u}$,
      \item $P_{2+k}(K)$ is controllable with respect to $L(G_2) \parallel \overline{P_k(K)}$ and $\Sigma_{2+k,u}$,
    \end{enumerate}
    where $\Sigma_{i+k,u}=(\Sigma_i\cup \Sigma_k)\cap \Sigma_u$, for $i=1,2$.
    $\hfill\triangleleft$
  \end{definition}

  The supremal conditionally controllable sublanguage always exists and equals to the union of all conditionally controllable sublanguages~\cite{JDEDS}. Let 
  \[
    \supCC(K, L, (\Sigma_{1,u}, \Sigma_{2,u}, \Sigma_{k,u}))
  \]
  denote the supremal conditionally controllable sublanguage of $K$ with respect to $L=L(G_1\|G_2\|G_k)$ and sets of uncontrollable events $\Sigma_{1,u}$, $\Sigma_{2,u}$, $\Sigma_{k,u}$. 
  
  The problem is now reduced to determining how to calculate the supremal conditionally-controllable sublanguage.
  
  Consider the setting of Problem~\ref{problem:controlsynthesis} and define the languages
  \begin{equation}\label{eqCC}
    \boxed{
    \begin{aligned}
      \supC_k     & =  \supC(P_k(K), L(G_k), \Sigma_{k,u})\\
      \supC_{1+k} & =  \supC(P_{1+k}(K), L(G_1) \parallel \overline{\supC_k}, \Sigma_{1+k,u})\\
      \supC_{2+k} & =  \supC(P_{2+k}(K), L(G_2) \parallel \overline{\supC_k}, \Sigma_{2+k,u})
    \end{aligned}}
  \end{equation}
  where $\supC(K,L,\Sigma_u)$ denotes the supremal controllable sublanguage of $K$ with respect to $L$ and $\Sigma_u$, see~\cite{CL08} for more details and algorithms.

  We have shown that $P_k(\supC_{i+k})\subseteq \supC_k$ always holds, for $i=1,2$, and that if the converse inclusion holds, we can compute the supremal conditionally-controllable sublanguage in a distributed way.
  \begin{theorem}[\cite{JDEDS}]
    Consider the setting of Problem~\ref{problem:controlsynthesis} and languages defined in~(\ref{eqCC}). If $\supC_k \subseteq P_k(\supC_{i+k})$, for $i=1,2$, then 
    \[
      \supC_{1+k} \parallel \supC_{2+k} = \supCC(K, L, (\Sigma_{1,u}, \Sigma_{2,u}, \Sigma_{k,u}))\,,
    \]
    where $L=L(G_1\|G_2\|G_k)$. 
    \hfill\QED
  \end{theorem}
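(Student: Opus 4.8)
The plan is to prove the asserted equality by two inclusions: that $M := \supC_{1+k}\parallel\supC_{2+k}$ is itself conditionally controllable (whence $M \subseteq \supCC(K,L,(\Sigma_{1,u},\Sigma_{2,u},\Sigma_{k,u}))$), and conversely that every conditionally controllable sublanguage $N$ of $K$ satisfies $N\subseteq M$. The hypothesis $\supC_k\subseteq P_k(\supC_{i+k})$ will be used only for the first inclusion; the second holds without it.

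For the first inclusion I would begin with the projection identities that drive everything. Combining the hypothesis with the always-valid inclusion $P_k(\supC_{i+k})\subseteq\supC_k$ gives $P_k(\supC_{1+k}) = \supC_k = P_k(\supC_{2+k})$. Since $\Sigma_1\cap\Sigma_2\subseteq\Sigma_k$, the shared alphabet of $\Sigma_1\cup\Sigma_k$ and $\Sigma_2\cup\Sigma_k$ is exactly $\Sigma_k$, so any $u_1\in\supC_{1+k}$ and $u_2\in\supC_{2+k}$ with $P_k(u_1)=P_k(u_2)$ interleave into a word of $\supC_{1+k}\parallel\supC_{2+k}$ whose $P_{1+k}$-image is $u_1$ and whose $P_{2+k}$-image is $u_2$; together with the equality of the $\Sigma_k$-images this yields $P_{1+k}(M)=\supC_{1+k}$, $P_{2+k}(M)=\supC_{2+k}$, and $P_k(M)=\supC_k$. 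Conditional controllability of $M$ can then be read directly off the definitions and~(\ref{eqCC}): condition~(1) becomes ``$\supC_k$ controllable w.r.t. $L(G_k)$ and $\Sigma_{k,u}$'' and conditions~(2),(3) become ``$\supC_{i+k}$ controllable w.r.t. $L(G_i)\parallel\overline{\supC_k}$ and $\Sigma_{i+k,u}$'', all true by construction. Finally $M\subseteq P_{1+k}(K)\parallel P_{2+k}(K)=K\subseteq L_m(G_1\parallel G_2\parallel G_k)$ by conditional decomposability of $K$, so $M$ is a conditionally controllable sublanguage of $K$ and hence is contained in the supremal one.

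For the reverse inclusion, write $N := \supCC(K,L,(\Sigma_{1,u},\Sigma_{2,u},\Sigma_{k,u}))$. I would show $P_{i+k}(N)\subseteq\supC_{i+k}$ for $i=1,2$; then $N\subseteq P_{1+k}(N)\parallel P_{2+k}(N)\subseteq\supC_{1+k}\parallel\supC_{2+k}$, the first inclusion holding for every language. Since $N$ is conditionally controllable and $N\subseteq K$, $P_k(N)$ is a controllable (w.r.t. $L(G_k),\Sigma_{k,u}$) sublanguage of $P_k(K)$, hence $P_k(N)\subseteq\supC_k$. It remains to upgrade controllability of $P_{i+k}(N)$ from the plant $L(G_i)\parallel\overline{P_k(N)}$ to the larger plant $L(G_i)\parallel\overline{\supC_k}$: given $s\in\overline{P_{i+k}(N)}$ and $\sigma\in\Sigma_{i+k,u}$ with $s\sigma\in L(G_i)\parallel\overline{\supC_k}$, if $\sigma\notin\Sigma_k$ then $P_k(s\sigma)=P_k(s)\in\overline{P_k(N)}$, while if $\sigma\in\Sigma_k$ then $\sigma\in\Sigma_{k,u}$ and $P_k(s)\sigma\in\overline{\supC_k}\subseteq L(G_k)$, so controllability of $P_k(N)$ w.r.t. $L(G_k),\Sigma_{k,u}$ forces $P_k(s)\sigma\in\overline{P_k(N)}$; either way $s\sigma\in L(G_i)\parallel\overline{P_k(N)}$, and controllability of $P_{i+k}(N)$ against that plant gives $s\sigma\in\overline{P_{i+k}(N)}$. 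Hence $P_{i+k}(N)$ is a controllable sublanguage of $P_{i+k}(K)$ w.r.t. $L(G_i)\parallel\overline{\supC_k}$, so $P_{i+k}(N)\subseteq\supC_{i+k}$, completing the argument.

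I expect the main obstacle to be this plant-enlargement step in the reverse inclusion: controllability with respect to a subplant does not transfer to a superplant in general, and the argument must exploit precisely that the only uncontrollable transitions added in passing from $\overline{P_k(N)}$ to $\overline{\supC_k}$ are $\Sigma_{k,u}$-labeled ones already controlled by $P_k(N)$ relative to $L(G_k)$. A secondary, more routine point is making the interleaving/projection identities precise — equivalently, $P_{i+k}(L_1\parallel L_2)=L_i$ for $L_i\subseteq(\Sigma_i\cup\Sigma_k)^*$ whenever $P_k(L_1)=P_k(L_2)$ — and tracking prefix closures through $P(\overline{L})=\overline{P(L)}$.
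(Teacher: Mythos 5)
Your proof is correct. Note that the paper itself only cites this theorem from~\cite{JDEDS} and instead proves the generalization Theorem~\ref{thmNEW}; measured against that proof, your reverse inclusion is essentially the same argument (show $P_{i+k}(\supCC)\subseteq\supC_{i+k}$ and conclude via Lemma~\ref{lem11}, with your explicit case split on $\sigma\in\Sigma_k$ versus $\sigma\notin\Sigma_k$ just inlining what the paper delegates to Lemma~\ref{feng} and the transitivity of controllability, Lemma~\ref{lem_transC}). The forward inclusion is where you genuinely diverge, and profitably so: by exploiting the stronger hypothesis to get the exact identities $P_k(M)=\supC_k$ and $P_{i+k}(M)=\supC_{i+k}$ (your interleaving argument is Lemma~\ref{lemma:Wonham} plus $\supC_{i+k}\subseteq (P^{i+k}_k)^{-1}(\supC_k)$), conditional controllability of $M$ is read off directly from the definitions in~(\ref{eqCC}), with no appeal to nonconflictingness of $\supC_{1+k}$ and $\supC_{2+k}$. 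This matters because the cited theorem, unlike Theorem~\ref{thmNEW}, does not assume nonconflictingness, so one cannot simply combine Proposition~\ref{weaker} with Theorem~\ref{thmNEW} to recover it; your self-contained forward direction is exactly what closes that gap. What the paper's weaker-hypothesis route buys in exchange is applicability when $P_k(\supC_{1+k})$ and $P_k(\supC_{2+k})$ differ, at the price of the extra nonconflictingness assumption.
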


  We can now further improve this result by introducing a weaker condition for nonconflicting supervisors. Recall that two languages $L_1$ and $L_2$ are {\em nonconflicting\/} if $\overline{L_1\|L_2} = \overline{L_1} \| \overline{L_2}$.
  
  \begin{theorem}\label{thmNEW}
    Consider the setting of Problem~\ref{problem:controlsynthesis} and languages defined in~(\ref{eqCC}). Assume that $\supC_{1+k}$ and $\supC_{2+k}$ are nonconflicting. If $P_k(\supC_{1+k}) \cap P_k(\supC_{2+k})$ is controllable with respect to $L(G_k)$ and $\Sigma_{k,u}$, then 
    \[
      \supC_{1+k} \parallel \supC_{2+k} = \supCC(K, L, (\Sigma_{1,u}, \Sigma_{2,u}, \Sigma_{k,u}))\,, 
    \]
    where $L=L(G_1\|G_2\|G_k)$.
  \end{theorem}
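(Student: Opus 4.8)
\medskip\noindent\textbf{Proof idea.}
I would prove the two inclusions separately; write $M=\supC_{1+k}\parallel\supC_{2+k}$. The key technical tool, standard in this framework (see~\cite{automatica2011,JDEDS}), is that when $\Sigma_1\cap\Sigma_2\subseteq\Sigma_k$ the natural projections distribute over synchronous composition of a language over $\Sigma_1\cup\Sigma_k$ with one over $\Sigma_2\cup\Sigma_k$. Applying this to $M$ yields $P_k(M)=P_k(\supC_{1+k})\cap P_k(\supC_{2+k})$, $P_{1+k}(M)=\supC_{1+k}\parallel P_k(\supC_{2+k})$, and symmetrically $P_{2+k}(M)=\supC_{2+k}\parallel P_k(\supC_{1+k})$. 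Since $P_k(\supC_{i+k})\subseteq\supC_k$ always holds, also $P_k(M)\subseteq\supC_k$, hence $\overline{P_k(M)}\subseteq\overline{\supC_k}$.

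For $M\subseteq\supCC(K,L,(\Sigma_{1,u},\Sigma_{2,u},\Sigma_{k,u}))$ it suffices, as $\supCC$ is the union of all conditionally controllable sublanguages of $K$, to verify that $M\subseteq K$ and that $M$ meets the three conditions of Definition~\ref{def:conditionalcontrollability}. The inclusion $M\subseteq P_{1+k}(K)\parallel P_{2+k}(K)=K$ follows from $\supC_{i+k}\subseteq P_{i+k}(K)$ and conditional decomposability of $K$. Condition~(1) for $M$, controllability of $P_k(M)=P_k(\supC_{1+k})\cap P_k(\supC_{2+k})$ with respect to $L(G_k)$ and $\Sigma_{k,u}$, is exactly the hypothesis. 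For Condition~(2), I would first combine the assumption that $\supC_{1+k}$ and $\supC_{2+k}$ are nonconflicting with the projection identity above to get $\overline{P_{1+k}(M)}=P_{1+k}(\overline{M})=P_{1+k}\bigl(\overline{\supC_{1+k}}\parallel\overline{\supC_{2+k}}\bigr)=\overline{\supC_{1+k}}\parallel\overline{P_k(\supC_{2+k})}$. Then, for any $s\in\overline{P_{1+k}(M)}$ and $u\in\Sigma_{1+k,u}$ with $su\in L(G_1)\parallel\overline{P_k(M)}$, the inclusion $\overline{P_k(M)}\subseteq\overline{\supC_k}$ gives $su\in L(G_1)\parallel\overline{\supC_k}$; controllability of $\supC_{1+k}$ with respect to $L(G_1)\parallel\overline{\supC_k}$ then yields $su\in\overline{\supC_{1+k}}$, while $P_k(su)\in\overline{P_k(M)}\subseteq\overline{P_k(\supC_{2+k})}$, so $su\in\overline{\supC_{1+k}}\parallel\overline{P_k(\supC_{2+k})}=\overline{P_{1+k}(M)}$. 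Condition~(3) is symmetric. Hence $M$ is conditionally controllable and contained in $K$, so $M\subseteq\supCC$.

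For the reverse inclusion, put $N=\supCC(K,L,(\Sigma_{1,u},\Sigma_{2,u},\Sigma_{k,u}))$, which is itself conditionally controllable. From $N\subseteq K$ and controllability of $P_k(N)$ with respect to $L(G_k)$ we obtain $P_k(N)\subseteq\supC_k$, hence $\overline{P_k(N)}\subseteq\overline{\supC_k}$. The crucial step is to lift controllability of $P_{1+k}(N)$ from the plant $L(G_1)\parallel\overline{P_k(N)}$ to the larger plant $L(G_1)\parallel\overline{\supC_k}$: given $s\in\overline{P_{1+k}(N)}$ and $u\in\Sigma_{1+k,u}$ with $su\in L(G_1)\parallel\overline{\supC_k}$, one shows $P_k(su)\in\overline{P_k(N)}$ --- immediate if $u\notin\Sigma_k$, since then $P_k(su)=P_k(s)$, and if $u\in\Sigma_k$ (so $u\in\Sigma_{k,u}$) from controllability of $P_k(N)$ with respect to $L(G_k)$, using $P_k(su)\in\overline{\supC_k}\subseteq L(G_k)$ --- whence $su\in L(G_1)\parallel\overline{P_k(N)}$, and controllability of $P_{1+k}(N)$ with respect to that plant gives $su\in\overline{P_{1+k}(N)}$. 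Thus $P_{1+k}(N)$ is controllable with respect to $L(G_1)\parallel\overline{\supC_k}$ and, being a sublanguage of $P_{1+k}(K)$, satisfies $P_{1+k}(N)\subseteq\supC_{1+k}$ by supremality; symmetrically $P_{2+k}(N)\subseteq\supC_{2+k}$. Since $N\subseteq P_{1+k}(N)\parallel P_{2+k}(N)$, it follows that $N\subseteq M$, and therefore $M=\supCC(K,L,(\Sigma_{1,u},\Sigma_{2,u},\Sigma_{k,u}))$.

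The main obstacle is Conditions~(2) and~(3) in the first inclusion. Without the nonconflicting assumption one only gets $\overline{P_{1+k}(M)}\subseteq\overline{\supC_{1+k}}\parallel\overline{P_k(\supC_{2+k})}$, which is too weak to close the controllability propagation; the nonconflicting hypothesis is precisely what lets the prefix closure be pushed through the synchronous product. The reverse inclusion $\supCC\subseteq M$, by contrast, holds unconditionally.
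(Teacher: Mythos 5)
Your proof is correct and follows essentially the same route as the paper's: both inclusions are handled identically, with the forward direction verifying the three conditions of conditional controllability for $M=\supC_{1+k}\parallel\supC_{2+k}$ (using nonconflictingness to identify $\overline{P_{1+k}(M)}$ with $\overline{\supC_{1+k}}\parallel P^{2+k}_k(\overline{\supC_{2+k}})$) and the reverse direction lifting controllability of $P_{i+k}(\supCC)$ from the plant $L(G_i)\parallel\overline{P_k(\supCC)}$ to $L(G_i)\parallel\overline{\supC_k}$ to conclude $P_{i+k}(\supCC)\subseteq\supC_{i+k}$ by supremality. The only difference is cosmetic: where the paper invokes its auxiliary lemmas on composition and transitivity of controllability, you carry out the same word-level controllability propagation directly.
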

  \begin{proof}
    Let $\supCC = \supCC(K, L, (\Sigma_{1,u}, \Sigma_{2,u}, \Sigma_{k,u}))$ and $M = \supC_{1+k} \parallel \supC_{2+k}$. To prove $M \subseteq \supCC$, we show that $M \subseteq P_{1+k}(K) \parallel P_{2+k}(K) = K$ (by conditional decomposability) is conditionally controllable with respect to $G_1, G_2, G_k$ and $\Sigma_{1,u}, \Sigma_{2,u}, \Sigma_{k,u}$. However, $P_k(M) = P_k(\supC_{1+k}) \cap P_k(\supC_{2+k})$ (by Lemma~\ref{lemma:Wonham}) is controllable with respect to $L(G_k)$ and $\Sigma_{k,u}$ by the assumption. Furthermore, $P_{1+k}(M) = \supC_{1+k} \parallel P^{2+k}_k(\supC_{2+k})$ implies that $\supC_{1+k} \parallel P^{1+k}_k(\supC_{1+k}) \parallel P^{2+k}_k(\supC_{2+k}) = \supC_{1+k} \parallel P^{2+k}_k(\supC_{2+k}) = P_{1+k}(M)$. Thus, $P_{1+k}(M)=\supC_{1+k} \parallel [P^{1+k}_k(\supC_{1+k}) \parallel P^{2+k}_k(\supC_{2+k})]$ is controllable with respect to $[L(G_1)\|\overline{\supC_k}]\parallel \overline{P_k(M)} = L(G_1)\parallel \overline{P_k(M)}$ by Lemma~\ref{feng} (because nonconflictingness of $\supC_{1+k}$ and $\supC_{2+k}$ implies nonconflictingness of $\supC_{1+k}$ and $P^{1+k}_k(\supC_{1+k}) \parallel P^{2+k}_k(\supC_{2+k})$) and by the fact that $P^{i+k}_k(\supC_{i+k})\subseteq \supC_k$, for $i=1,2$, cf.~\cite{JDEDS}. Similarly for $P_{2+k}(M)$, hence $M \subseteq \supCC$.

    To prove the opposite inclusion, it is sufficient to show by Lemma~\ref{lem11} that $P_{i+k}(\supCC) \subseteq \supC_{i+k}$, for $i=1,2$. To prove this note that $P_{1+k}(\supCC)$ is controllable with respect to $L(G_1)\parallel \overline{P_k(\supCC)}$ and $\Sigma_{1+k,u}$, and $L(G_1) \parallel \overline{P_k(\supCC)}$ is controllable with respect to $L(G_1) \parallel \overline{\supC_k}$ and $\Sigma_{1+k,u}$ (by Lemma~\ref{feng}) because $P_k(\supCC)$ being controllable with respect to $L(G_k)$ is also controllable with respect to $\overline{\supC_k}\subseteq L(G_k)$. By the transitivity of controllability (Lemma~\ref{lem_transC}), $P_{1+k}(\supCC)$ is controllable with respect to $L(G_1) \parallel \overline{\supC_k}$ and $\Sigma_{1+k,u}$, which implies that $P_{1+k}(\supCC) \subseteq \supC_{1+k}$. The other case is analogous, hence $\supCC \subseteq M$ and the proof is complete.
  \end{proof}
  
  Note that the controllability condition of Theorem~\ref{thmNEW} is weaker than to require that $\supC_k \subseteq P_k(\supC_{i+k})$, for $i=1,2$.
  \begin{proposition}\label{weaker}
    If $\supC_k \subseteq P_k(\supC_{i+k})$, for $i=1,2$, then $P_k(\supC_{1+k}) \cap P_k(\supC_{2+k})$ is controllable with respect to $L(G_k)$ and $\Sigma_{k,u}$.
  \end{proposition}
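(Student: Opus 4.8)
The plan is to observe that the hypothesis collapses both projected languages onto $\supC_k$, which is controllable by its very construction, so essentially nothing is left to prove.

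First I would recall the inclusion $P_k(\supC_{i+k}) \subseteq \supC_k$, which holds for $i=1,2$ without any extra assumption; this was established in~\cite{JDEDS} and is already invoked in the proof of Theorem~\ref{thmNEW} above. Combining it with the hypothesis $\supC_k \subseteq P_k(\supC_{i+k})$ yields the equality $P_k(\supC_{i+k}) = \supC_k$ for both $i=1,2$, and therefore $P_k(\supC_{1+k}) \cap P_k(\supC_{2+k}) = \supC_k$.

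Second, I would simply appeal to the definition $\supC_k = \supC(P_k(K), L(G_k), \Sigma_{k,u})$: being a supremal controllable sublanguage, $\supC_k$ is in particular controllable with respect to $L(G_k)$ and $\Sigma_{k,u}$. Substituting the equality obtained in the previous step, $P_k(\supC_{1+k}) \cap P_k(\supC_{2+k})$ is controllable with respect to $L(G_k)$ and $\Sigma_{k,u}$, which is exactly the assertion.

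There is essentially no obstacle here: the proposition is a one-line consequence of the always-valid inclusion $P_k(\supC_{i+k}) \subseteq \supC_k$ together with the hypothesis. The only point deserving a little care is to quote that inclusion in the correct direction and to note that it requires no hypothesis, so that the assumption of the proposition genuinely forces the equality $P_k(\supC_{i+k}) = \supC_k$ rather than merely one inclusion; everything else is immediate. This also makes transparent why the controllability condition in Theorem~\ref{thmNEW} is the weaker of the two, as claimed in the remark preceding the proposition.
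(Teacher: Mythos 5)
Your argument is correct and is exactly the paper's own proof: the always-valid inclusion $P_k(\supC_{i+k}) \subseteq \supC_k$ combined with the hypothesis forces $P_k(\supC_{1+k}) \cap P_k(\supC_{2+k}) = \supC_k$, which is controllable with respect to $L(G_k)$ and $\Sigma_{k,u}$ by definition. Nothing to add.
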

  \begin{proof}
    This is obvious, because due to the converse inclusion being always true we have that $P_k(\supC_{i+k})=\supC_k$, for $i=1,2$. Hence, $P_k(\supC_{1+k}) \cap P_k(\supC_{2+k})=\supC_k$ is controllable with respect to $L(G_k)$ and $\Sigma_{k,u}$ by definition of $\supC_k$.
  \end{proof} 
  
  Using the example from~\cite{JDEDS} we can now show that there are languages such that $\supC_k\not\subseteq P_k(\supC_{i+k})$, but such that $P_k(\supC_{1+k}) \cap P_k(\supC_{2+k})$ is controllable with respect to $L(G_k)$ and $\Sigma_{k,u}$. 
  \begin{example}
    Let $G_1$ and $G_2$ be generators as shown in Fig.~\ref{figEx}, and let $K$ be the language of the generator shown in Fig.~\ref{figExx}. Let $\Sigma_c=\{a_1,a_2,c\}$ and $\Sigma_k=\{a_1,a_2,c,u\}$. Let the coordinator $G_k=P_k(G_1)\parallel P_k(G_2)$. Then $K$ is conditionally decomposable, $\supC_k=\overline{\{a_1a_2,a_2a_1\}}$, $\supC_{1+k}=\overline{\{a_2a_1u_1\}}$, $\supC_{2+k}=\overline{\{a_1a_2u_2\}}$, and $\supC_k \not\subseteq P_k(\supC_{i+k})$. However, $P_k(\supC_{1+k}) \cap P_k(\supC_{2+k})=\{\eps\}$ is controllable with respect to $L(G_k)$ and $\Sigma_{k,u}$. $\hfill\triangleleft$
  \end{example}
  \begin{figure}[htb]
    \centering
    \subfloat{
      \begin{tikzpicture}[->,>=stealth,shorten >=1pt,auto,node distance=1.5cm,
        state/.style={circle,minimum size=0mm, very thin,draw=black,initial text=}]
        \node[state,initial,accepting] (1) {1};
        \node[state,accepting] (2) [below right of=1] {2};
        \node[state,accepting] (3) [below left of=2] {3};
        \node[state,accepting] (4) [below left of=1] {4};
        \path
          (1) edge node {$a_1$} (2)
          (2) edge node {$u_1$} (3)
          (1) edge node[above left] {$c$} (4)
          (4) edge node[below left] {$u$} (3);
      \end{tikzpicture}
    }
    \qquad
    \subfloat{
      \begin{tikzpicture}[->,>=stealth,shorten >=1pt,auto,node distance=1.5cm,
        state/.style={circle,minimum size=0mm, very thin,draw=black,initial text=}]
        \node[state,initial,accepting] (1) {1};
        \node[state,accepting] (2) [below right of=1] {2};
        \node[state,accepting] (3) [below left of=2] {3};
        \node[state,accepting] (4) [below left of=1] {4};
        \path
          (1) edge node {$a_2$} (2)
          (2) edge node {$u_2$} (3)
          (1) edge node[above left] {$c$} (4)
          (4) edge node[below left] {$u$} (3);
      \end{tikzpicture}
    }
    \caption{Generators $G_1$ and $G_2$.}
    \label{figEx}
  \end{figure}
  \begin{figure}[htb]
    \centering
    \begin{tikzpicture}[->,>=stealth,shorten >=1pt,auto,node distance=1.5cm,
      state/.style={circle,minimum size=0mm, very thin,draw=black,initial text=}]
      \node[state,initial,accepting] (1) {1};
      \node[state,accepting] (2) [above right of=1] {2};
      \node[state,accepting] (3) [right of=2] {3};
      \node[state,accepting] (4) [right of=3] {4};
      \node[state,accepting] (5) [right of=1] {5};
      \node[state,accepting] (6) [right of=5] {6};
      \node[state,accepting] (7) [right of=6] {7};
      \path
        (1) edge node {$a_1$} (2)
        (2) edge node {$a_2$} (3)
        (3) edge node {$u_2$} (4)
        (1) edge node {$a_2$} (5)
        (5) edge node {$a_1$} (6)
        (6) edge node {$u_1$} (7);
    \end{tikzpicture}
    \caption{Specification $K$.}
    \label{figExx}
  \end{figure}

  On the other hand, $P_k(\supC_{1+k}) \cap P_k(\supC_{2+k})$ is not always controllable with respect to $L(G_k)$ and $\Sigma_{k,u}$.
  \begin{example}
    Let $G_1$ and $G_2$ be generators as shown in Fig.~\ref{figEx2}, and let $K$ be the language of the generator shown in Fig.~\ref{figExx2}. Let $\Sigma_c=\{a,c_1,c_2\}$ and $\Sigma_k=\{a,b\}$. Let the coordinator $G_k=P_k(G_1)\parallel P_k(G_2)$. Then the language $K$ is conditionally decomposable, $\supC_k=\overline{\{b\}}$, $\supC_{1+k}=\overline{\{c_1b\}}$, $\supC_{2+k}=\{\eps\}$, and $P_k(\supC_{1+k}) \cap P_k(\supC_{2+k})=\{\eps\}$ is not controllable with respect to $L(G_k)=\overline{\{ab,b\}}$ and $\Sigma_{k,u}=\{b\}$. \hfill$\triangleleft$
  \end{example}
  \begin{figure}[htb]
    \centering
    \subfloat{
      \begin{tikzpicture}[->,>=stealth,shorten >=1pt,auto,node distance=1.5cm,
        state/.style={circle,minimum size=0mm, very thin,draw=black,initial text=}]
        \node[state,initial,accepting] (1) {1};
        \node[state,accepting] (2) [below right of=1] {2};
        \node[state,accepting] (3) [below left of=2] {3};
        \node[state,accepting] (4) [below left of=1] {4};
        \path
          (1) edge node {$c_1$} (2)
          (2) edge node {$b$} (3)
          (1) edge node[above left] {$a$} (4)
          (4) edge node[below left] {$b$} (3);
      \end{tikzpicture}
    }
    \qquad
    \subfloat{
      \begin{tikzpicture}[->,>=stealth,shorten >=1pt,auto,node distance=1.5cm,
        state/.style={circle,minimum size=0mm, very thin,draw=black,initial text=}]
        \node[state,initial,accepting] (1) {1};
        \node[state,accepting] (2) [below right of=1] {2};
        \node[state,accepting] (3) [below left of=2] {3};
        \node[state,accepting] (4) [below left of=1] {4};
        \path
          (1) edge node {$c_2$} (2)
          (2) edge node {$b,u_2$} (3)
          (1) edge node[above left] {$a$} (4)
          (4) edge node[below left] {$b$} (3);
      \end{tikzpicture}
    }
    \caption{Generators $G_1$ and $G_2$.}
    \label{figEx2}
  \end{figure}
  \begin{figure}[htb]
    \centering
    \begin{tikzpicture}[->,>=stealth,shorten >=1pt,auto,node distance=1.5cm,
      state/.style={circle,minimum size=0mm, very thin,draw=black,initial text=}]
      \node[state,initial,accepting] (1) {1};
      \node[state,accepting] (2) [above right of=1] {2};
      \node[state,accepting] (3) [right of=2] {3};
      \node[state,accepting] (4) [right of=3] {4};
      \node[state,accepting] (5) [right of=1] {5};
      \path
        (1) edge node {$c_1$} (2)
        (2) edge node {$c_2$} (3)
        (3) edge node {$b$} (4)
        (1) edge node {$c_2$} (5)
        (5) edge node {$c_1$} (3)
        (1) edge[bend right=45] node {$a$} (4);
    \end{tikzpicture}
    \caption{Specification $K$.}
    \label{figExx2}
  \end{figure}
  
  Recall that it is still an open problem how to compute the supremal conditionally-controllable sublanguage for a general, non-prefix-closed language. 
  
  The following conditions were required in~\cite{automatica2011} to prove the main result for prefix-closed languages. We recall the result here and show that the previous condition is a weaker condition than the one required in~\cite{automatica2011}. 
  
  The projection $P:\Sigma^* \to \Sigma_0^*$, where $\Sigma_0\subseteq \Sigma$, is an {\em $L$-observer} for $L\subseteq \Sigma^*$ if, for all $t\in P(L)$ and $s\in \overline{L}$, $P(s)$ is a prefix of $t$ implies that there exists $u\in \Sigma^*$ such that $su\in L$ and $P(su)=t$. 
  
  The projection $P:\Sigma^* \to \Sigma_0^*$ is {\em output control consistent\/} (OCC) for $L\subseteq\Sigma^*$ if for every $s\in \overline{L}$ of the form $s=\sigma_1\dots\sigma_\ell$ or $s = s'\sigma_0 \sigma_1 \dots \sigma_\ell$, $\ell\ge 1$, where $s'\in \Sigma^*$, $\sigma_0, \sigma_\ell\in \Sigma_k$, and $\sigma_i \in \Sigma\setminus \Sigma_k$, for $i=1,2,\dots,\ell-1$, if $\sigma_\ell \in \Sigma_{u}$, then $\sigma_i \in \Sigma_{u}$, for all $i=1,2,\dots,\ell-1$. 
  
  The OCC condition can be replaced by a weaker condition called local control consistency (LCC) discussed in~\cite{SB11,SB08}, see~\cite{JDEDS}. Let $L$ be a prefix-closed language over $\Sigma$, and let $\Sigma_0$ be a subset of $\Sigma$. The projection $P:\Sigma^*\to \Sigma_0^*$ is {\em locally control consistent\/} (LCC) with respect to a word $s\in L$ if for all events $\sigma_u\in \Sigma_0\cap \Sigma_u$ such that $P(s)\sigma_u\in P(L)$, it holds that either there does not exist any word $u\in (\Sigma\setminus \Sigma_0)^*$ such that $su\sigma_u \in L$, or there exists a word $u\in (\Sigma_u\setminus \Sigma_0)^*$ such that $su\sigma_u \in L$. The projection $P$ is LCC with respect to $L$ if $P$ is LCC for all words of $L$.

  \begin{theorem}[\cite{JDEDS}]
    Consider the setting of Problem~\ref{problem:controlsynthesis} with a prefix-closed specification $K$. Consider the languages defined in~(\ref{eqCC}) and assume that $\supC_{1+k}$ and $\supC_{2+k}$ are nonconflicting. Let $P^{i+k}_k$ be an $(P^{i+k}_i)^{-1}L(G_i)$-observer and OCC (resp. LCC) for $(P^{i+k}_i)^{-1}L(G_i)$, for $i=1,2$. Then 
    \[
      \supC_{1+k} \parallel \supC_{2+k} = \supCC(K, L, (\Sigma_{1,u}, \Sigma_{2,u}, \Sigma_{k,u}))\,, 
    \]
    where $L=L(G_1\|G_2\|G_k)$. \hfill\QED
  \end{theorem}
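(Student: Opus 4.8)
The plan is to reduce this statement to Theorem~\ref{thmNEW}. Nonconflictingness of $\supC_{1+k}$ and $\supC_{2+k}$ is already assumed, so it suffices to verify that $P_k(\supC_{1+k})\cap P_k(\supC_{2+k})$ is controllable with respect to $L(G_k)$ and $\Sigma_{k,u}$; once that is done, Theorem~\ref{thmNEW} delivers the claimed equality directly. I would obtain this controllability from the stronger claim $P^{i+k}_k(\supC_{i+k})=\supC_k$ for $i=1,2$: then the intersection is just $\supC_k$, which is controllable with respect to $L(G_k)$ and $\Sigma_{k,u}$ by the very definition of $\supC_k$. (Equivalently, the inclusion $\supC_k\subseteq P^{i+k}_k(\supC_{i+k})$ together with Proposition~\ref{weaker} and the earlier theorem of~\cite{JDEDS} closes the argument along the other route.) The inclusion $P^{i+k}_k(\supC_{i+k})\subseteq\supC_k$ always holds (cf.~\cite{JDEDS}), so the real work is the opposite inclusion, and this is exactly where the observer and OCC/LCC hypotheses are used.

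For $\supC_k\subseteq P^{i+k}_k(\supC_{i+k})$ I would invoke the commutation property of the supremal controllable sublanguage under a projection that is simultaneously an observer and OCC (resp.\ LCC) for a prefix-closed plant --- the Feng--Wonham-type result underlying~\cite{automatica2011}: for such a projection $P:\Sigma^*\to\Sigma_0^*$ and $E\subseteq L$, the language $P(\supC(E,L,\Sigma_u))$ is again the supremal controllable sublanguage of $P(E)$ with respect to $P(L)$ and $\Sigma_u\cap\Sigma_0$. Applied to $\supC_{i+k}=\supC(P_{i+k}(K),\,L(G_i)\parallel\overline{\supC_k},\,\Sigma_{i+k,u})$ with $P=P^{i+k}_k$, and using that $P^{i+k}_k$ carries the specification data onto $P_k(K)$ and the plant onto $\overline{\supC_k}$ (the latter relying on $\overline{\supC_k}\subseteq L(G_k)$, in particular valid for the canonical coordinator $G_k=P_k(G_1)\parallel P_k(G_2)$), the projection $P^{i+k}_k(\supC_{i+k})$ equals the supremal controllable sublanguage of $P_k(K)$ with respect to $\overline{\supC_k}$ and $\Sigma_{k,u}$; since $\supC_k$ is prefix-closed, contained in $P_k(K)$, and trivially controllable with respect to its own closure, that language is $\supC_k$ itself, which gives the desired inclusion.

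The step I expect to be the main obstacle is matching hypotheses in this commutation argument: the observer and OCC/LCC properties are given for the inverse-image language $(P^{i+k}_i)^{-1}L(G_i)$, whereas the plant that actually appears in the definition of $\supC_{i+k}$ is the smaller language $L(G_i)\parallel\overline{\supC_k}$. I would need either a lemma transferring the observer and OCC/LCC properties from $(P^{i+k}_i)^{-1}L(G_i)$ to $L(G_i)\parallel\overline{\supC_k}$, or --- more robustly --- an argument that absorbs the $\overline{\supC_k}$-constraint into the specification, replacing $\supC_{i+k}$ by $\supC\big(P_{i+k}(K)\cap(P^{i+k}_k)^{-1}(\supC_k),\,(P^{i+k}_i)^{-1}L(G_i),\,\Sigma_{i+k,u}\big)$ and checking this equals $\supC_{i+k}$ (one inclusion follows from intersecting controllability with a subset; the other uses that $(P^{i+k}_k)^{-1}(\supC_k)$ is controllable with respect to $(P^{i+k}_k)^{-1}(L(G_k))$, since inverse projection preserves controllability, together with controllability of $\supC_k$ with respect to $L(G_k)$), so that the commutation lemma can be applied with the full plant $(P^{i+k}_i)^{-1}L(G_i)$ for which the hypotheses hold. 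Carrying out this plant substitution cleanly, along with the attendant bookkeeping with projections, intersections, and alphabets, is the delicate part; the rest is routine given Theorem~\ref{thmNEW} and the cited lemmas.
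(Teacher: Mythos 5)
Your overall plan---reduce the statement to Theorem~\ref{thmNEW} by verifying that $P_k(\supC_{1+k})\cap P_k(\supC_{2+k})$ is controllable with respect to $L(G_k)$ and $\Sigma_{k,u}$---is exactly the route the paper takes (Lemma~\ref{lem8} and Theorem~\ref{thm10}). However, the way you propose to verify that controllability contains a genuine gap: the intermediate claim $\supC_k\subseteq P^{i+k}_k(\supC_{i+k})$ (hence $P^{i+k}_k(\supC_{i+k})=\supC_k$) is \emph{false} under the stated hypotheses. The paper's own first example (Figs.~\ref{figEx} and~\ref{figExx}) is a counterexample: there the only events erased by $P^{i+k}_k$ are the uncontrollable $u_1,u_2$, so OCC/LCC for $(P^{i+k}_i)^{-1}L(G_i)$ holds vacuously and the observer property is readily checked, $\supC_{1+k}$ and $\supC_{2+k}$ are nonconflicting, and yet $\supC_k=\overline{\{a_1a_2,a_2a_1\}}\not\subseteq P_k(\supC_{1+k})=\overline{\{a_2a_1\}}$. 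The Feng--Wonham-type commutation you invoke, $P(\supC(E,L,\Sigma_u))=\supC(P(E),P(L),\Sigma_u\cap\Sigma_0)$, does not hold for arbitrary specifications $E\subseteq L$; it requires $E$ to be liftable from the abstracted level, i.e., of the form $P^{-1}(F)\cap L$. Here $P_{1+k}(K)$ genuinely constrains the private event $u_1$ (it forbids $a_1u_1$ but permits $a_2a_1u_1$), so the low-level $\supC$ must prune strings for reasons that become invisible after $u_1$ is projected away; under observer plus OCC one only gets the inclusion $P(\supC(E,L))\subseteq\supC(P(E),P(L))$, which is the direction you do not need. Your ``absorb the $\overline{\supC_k}$-constraint into the specification'' step does not repair this, because the resulting specification is still not an inverse image under $P^{i+k}_k$.

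What the paper does instead (Lemma~\ref{lem8}) is prove only the weaker statement that is actually needed: $P_k(\supC_{1+k})\cap P_k(\supC_{2+k})=P_k(\supC_{1+k}\parallel\supC_{2+k})$ is controllable, argued directly. Given $t$ in the closure of this language and $u\in\Sigma_{k,u}$ with $tu\in P_k(L)$, lift $t$ to some $s\in\overline{\supC_{1+k}\parallel\supC_{2+k}}\subseteq L$, use the observer property of $P_k$ for $L$ (assembled from the componentwise observer assumptions via Lemma~\ref{obsComposition}) to obtain $sv_1u\in L$ with $P_k(v_1u)=u$, use OCC (resp.\ LCC) to replace $v_1$ by an uncontrollable word, and conclude by controllability of each $\supC_{i+k}$ that $sv_1u$ stays in $\overline{\supC_{1+k}}\parallel\overline{\supC_{2+k}}=\overline{\supC_{1+k}\parallel\supC_{2+k}}$, whence $tu$ survives the projection. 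This sidesteps any commutation of $\supC$ with projection. If you want to salvage your write-up, replace the false equality $P^{i+k}_k(\supC_{i+k})=\supC_k$ by this direct controllability argument for the intersection (and note that passing from controllability with respect to $P_k(L(G_1))\parallel P_k(L(G_2))\parallel L(G_k)$ to controllability with respect to $L(G_k)$ uses the additional assumption $L(G_k)\subseteq P_k(L(G_1))\parallel P_k(L(G_2))$, which holds for the standard coordinator $G_k=P_k(G_1)\parallel P_k(G_2)$).
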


  We can now prove that the assumptions of the previous theorem are stronger than the assumptions of Theorem~\ref{thmNEW}. This is shown in the following lemma and corollary, and summarized in Theorem~\ref{thm10}.
  \begin{lem}\label{lem8}
    Consider the setting of Problem~\ref{problem:controlsynthesis} and the languages defined in~(\ref{eqCC}). Assume that $\supC_{1+k}$ and $\supC_{2+k}$ are nonconflicting, and let the projection $P^{i+k}_k:(\Sigma_i\cup\Sigma_k)^*\to\Sigma_k^*$ be an $(P^{i+k}_i)^{-1}L(G_i)$-observer and OCC (resp. LCC) for $(P^{i+k}_i)^{-1}L(G_i)$, for $i=1,2$. Then $P^{1+k}_k(\supC_{1+k})\cap P^{2+k}_k(\supC_{2+k})$ is controllable with respect to $P_k(L(G_1))\parallel P_k(L(G_2))\parallel L(G_k)$ and $\Sigma_{k,u}$.
  \end{lem}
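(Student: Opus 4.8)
The plan is to show controllability of the intersection $P^{1+k}_k(\supC_{1+k})\cap P^{2+k}_k(\supC_{2+k})$ directly from the definition, exploiting that each factor $\supC_{i+k}$ is already controllable with respect to $L(G_i)\parallel\overline{\supC_k}$ and that the observer plus OCC/LCC hypotheses let one "push" controllability through the projection $P^{i+k}_k$. First I would recall the standard fact (used in~\cite{JDEDS}) that if $\supC_{i+k}$ is controllable with respect to $(P^{i+k}_i)^{-1}L(G_i)$ with respect to $\Sigma_{i+k,u}$, and $P^{i+k}_k$ is both an $(P^{i+k}_i)^{-1}L(G_i)$-observer and OCC (resp.\ LCC) for that language, then the projected language $P^{i+k}_k(\supC_{i+k})$ is controllable with respect to $P_k((P^{i+k}_i)^{-1}L(G_i)) = P_k(L(G_i))$ and $\Sigma_{k,u}$. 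This is the key lemma on preservation of controllability under natural projections with the observer/OCC(LCC) property; I would cite it (it is the result underlying the theorem of~\cite{JDEDS} quoted just above) and apply it once for $i=1$ and once for $i=2$.

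Having established that $P^{1+k}_k(\supC_{1+k})$ is controllable with respect to $P_k(L(G_1))$ and $\Sigma_{k,u}$, and $P^{2+k}_k(\supC_{2+k})$ is controllable with respect to $P_k(L(G_2))$ and $\Sigma_{k,u}$, the next step is to combine these into a statement about the synchronous product. I would first note that a language controllable with respect to $P_k(L(G_1))$ is, trivially, also controllable with respect to the smaller plant $P_k(L(G_1))\parallel P_k(L(G_2))\parallel L(G_k)$ in the relevant sense once one passes to the product — more precisely, I would invoke the fact that the synchronous product of two languages, each controllable with respect to its own plant over the shared alphabet $\Sigma_k$, is controllable with respect to the synchronous product of the plants; this is the standard result that controllability is preserved under parallel composition when all languages live over the same alphabet (here everything is over $\Sigma_k$, so there are no hidden-event complications). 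Since both $P^{1+k}_k(\supC_{1+k})$ and $P^{2+k}_k(\supC_{2+k})$ are languages over $\Sigma_k$, their intersection equals their synchronous product, and controllability of the product with respect to $P_k(L(G_1))\parallel P_k(L(G_2))$ follows. Finally, $P_k(L(G_1))\parallel P_k(L(G_2))\parallel L(G_k)$ is a sublanguage of $P_k(L(G_1))\parallel P_k(L(G_2))$, so controllability with respect to the larger plant descends to controllability with respect to this smaller one (using that all three are prefix-closed and that $P^{i+k}_k(\supC_{i+k})$ already lies inside $\overline{\supC_k}\subseteq L(G_k)$, which keeps us within $L(G_k)$).

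The main obstacle I anticipate is the very first step: cleanly justifying that the observer-plus-OCC/LCC hypothesis really does transport controllability of $\supC_{i+k}$ with respect to $(P^{i+k}_i)^{-1}L(G_i)$ down to controllability of $P^{i+k}_k(\supC_{i+k})$ with respect to $P_k(L(G_i))$. One must be careful that the uncontrollable events of the projected system are exactly $\Sigma_{k,u}$ and that the OCC/LCC condition is precisely what rules out the pathological situation where a projected uncontrollable continuation in $P_k(L(G_i))$ has no uncontrollable preimage in $L(G_i)$ — this is where the definition of LCC (the existence of $u\in(\Sigma_u\setminus\Sigma_0)^*$ with $su\sigma_u\in L$) does the work. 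I would lean on the corresponding lemma already proved in~\cite{JDEDS} rather than reproving it, so in the write-up this reduces to a citation; the remaining steps (preservation under parallel composition over a common alphabet, and passing to a sublanguage plant) are routine and I would dispatch them with one or two sentences each, referring to~\cite{CL08} for the parallel-composition fact.
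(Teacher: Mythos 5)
Your overall architecture is genuinely different from the paper's: you project each $\supC_{i+k}$ down to $\Sigma_k$ separately and then compose, whereas the paper first forms $\supC_{1+k}\parallel\supC_{2+k}$, identifies $P^{1+k}_k(\supC_{1+k})\cap P^{2+k}_k(\supC_{2+k})$ with $P_k(\supC_{1+k}\parallel\supC_{2+k})$ via Lemma~\ref{lemma:Wonham}, lifts a candidate uncontrollable continuation to the \emph{global} plant $L=L(G_1\|G_2\|G_k)$ (for which $P_k$ is an observer by Lemma~\ref{obsComposition}), and verifies controllability there directly from the definition. Unfortunately, the first step of your route, which carries all the weight, has a genuine gap.

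You invoke the hierarchical-control fact in the form: if $\supC_{i+k}$ is controllable with respect to $(P^{i+k}_i)^{-1}L(G_i)$ and $P^{i+k}_k$ is an observer and OCC/LCC for that language, then $P^{i+k}_k(\supC_{i+k})$ is controllable with respect to $P_k(L(G_i))$ and $\Sigma_{k,u}$. But the hypothesis of that fact is not available: by~(\ref{eqCC}), $\supC_{i+k}$ is controllable with respect to $L(G_i)\parallel\overline{\supC_k}$, a restriction of $(P^{i+k}_i)^{-1}L(G_i)$, and controllability does not ascend from a subplant to a superplant; nor can you apply the fact with the plant $L(G_i)\parallel\overline{\supC_k}$, since the observer and OCC/LCC hypotheses are stated for $(P^{i+k}_i)^{-1}L(G_i)$ and do not transfer to sublanguages. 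Worse, the intermediate conclusion you aim for is too strong to be true in general: an uncontrollable $u\in\Sigma_{k,u}$ enabled after $t\in\overline{P^{i+k}_k(\supC_{i+k})}$ in $P_k(L(G_i))$ may lead outside $L(G_k)$, hence outside $\overline{\supC_k}$, so the lifted continuation $sv_1u$ produced by the observer/OCC argument leaves the plant $L(G_i)\parallel\overline{\supC_k}$ over which $\supC_{i+k}$ is controllable, and nothing forces $sv_1u\in\overline{\supC_{i+k}}$. This is exactly why the lemma's plant is $P_k(L(G_1))\parallel P_k(L(G_2))\parallel L(G_k)$ rather than $P_k(L(G_1))\parallel P_k(L(G_2))$: one needs $tu\in L(G_k)$ \emph{before} projecting, so that controllability of $\supC_k$ yields $tu\in\overline{\supC_k}$ and the lifted string stays inside the local plants; your plan of restricting to the smaller plant only at the very end cannot recover this. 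A secondary, fixable point: your composition step uses Lemma~\ref{feng}, whose nonconflictingness hypothesis for the two projected languages you never verify (it does follow from nonconflictingness of $\supC_{1+k}$ and $\supC_{2+k}$ together with Lemma~\ref{lemma:Wonham} and $P_k(\overline{\,\cdot\,})=\overline{P_k(\cdot)}$, but for non-prefix-closed languages it must be said).
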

  \begin{proof}
    Since $\Sigma_{1+k}\cap\Sigma_{2+k}=\Sigma_k$, Lemma~\ref{lemma:Wonham} implies that $P^{1+k}_k(\supC_{1+k})\cap P^{2+k}_k(\supC_{2+k}) = P_k(\supC_{1+k}\parallel \supC_{2+k})$. By Lemma~\ref{obsComposition}, because $P_k^k = id$ is an $L(G_k)$-observer, $P_k$ is an $L:=L(G_1\|G_2\|G_k)$-observer. Assume that $t\in \overline{P_k(\supC_{1+k}\parallel \supC_{2+k})}$, $u\in\Sigma_{k,u}$, and $tu\in P_k(L)=P_k(L(G_1))\parallel P_k(L(G_2))\parallel L(G_k)$. Then there exists $s\in \overline{\supC_{1+k}\parallel \supC_{2+k}}\subseteq L$ such that $P_k(s)=t$. By the observer property, there exists $v$ such that $sv\in L$ and $P_k(sv)=tu$, that is, $v=v_1u$ with $P_k(v_1u)=u$. By the OCC property, $v_1\in\Sigma_u^*$, and by controllability of $\supC_{i+k}$, $i=1,2$, $sv_1u\in \overline{\supC_{1+k}} \parallel \overline{\supC_{2+k}} = \overline{\supC_{1+k}\parallel \supC_{2+k}}$, hence $tu\in \overline{P_k(\supC_{1+k}\parallel \supC_{2+k})}$.
    
    Similarly for LCC: from $sv=sv_1u\in L$, by the LCC property, there exists $v_2\in (\Sigma_u\setminus\Sigma_k)^*$ such that $sv_2u\in L$, and by controllability of $\supC_{i+k}$, $i=1,2$, $sv_2u\in \overline{\supC_{1+k}} \parallel \overline{\supC_{2+k}} = \overline{\supC_{1+k}\parallel \supC_{2+k}}$, hence $tu\in \overline{P_k(\supC_{1+k}\parallel \supC_{2+k})}$.
  \end{proof}
  
  Note that if $L(G_k)\subseteq P_k(L(G_1))\parallel P_k(L(G_2))$, which is actually the way we usually define the coordinator (since we usually define $G_k=P_k(G_1)\parallel P_k(G_2)$), we get the following corollary.
  \begin{cor}
    Consider the setting of Problem~\ref{problem:controlsynthesis} with $L(G_k)\subseteq P_k(L(G_1))\parallel P_k(L(G_2))$ and the languages defined in~(\ref{eqCC}). Assume that $\supC_{1+k}$ and $\supC_{2+k}$ are nonconflicting. Let $P^{i+k}_k:(\Sigma_i\cup\Sigma_k)^*\to\Sigma_k^*$ be an $(P^{i+k}_i)^{-1}L(G_i)$-observer and OCC (resp. LCC) for $(P^{i+k}_i)^{-1}L(G_i)$, for $i=1,2$. Then $P^{1+k}_k(\supC_{1+k})\cap P^{2+k}_k(\supC_{2+k})$ is controllable with respect to $L(G_k)$ and $\Sigma_{k,u}$.
  \end{cor}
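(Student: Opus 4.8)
The plan is to reduce the corollary directly to Lemma~\ref{lem8} together with the transitivity of controllability (Lemma~\ref{lem_transC}, used already in the proof of Theorem~\ref{thmNEW}). By Lemma~\ref{lem8}, under the stated observer and OCC (resp. LCC) hypotheses, the language $N := P^{1+k}_k(\supC_{1+k})\cap P^{2+k}_k(\supC_{2+k})$ is controllable with respect to $P_k(L(G_1))\parallel P_k(L(G_2))\parallel L(G_k)$ and $\Sigma_{k,u}$. The hypothesis $L(G_k)\subseteq P_k(L(G_1))\parallel P_k(L(G_2))$ gives $P_k(L(G_1))\parallel P_k(L(G_2))\parallel L(G_k) = L(G_k)$, since intersecting with a superset leaves the smaller prefix-closed language unchanged (all three languages here are prefix-closed, being generated languages, and the parallel composition of prefix-closed languages over these alphabets is prefix-closed). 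Hence $N$ is controllable with respect to $L(G_k)$ and $\Sigma_{k,u}$, which is exactly the conclusion.

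Concretely, first I would observe that $L(G_1\|G_2\|G_k) = L(G_1)\parallel L(G_2)\parallel L(G_k)$, and that $P_k$ applied to this, as computed in the proof of Lemma~\ref{lem8}, is $P_k(L(G_1))\parallel P_k(L(G_2))\parallel L(G_k)$; this is the ambient language with respect to which Lemma~\ref{lem8} asserts controllability. Next I would use the set-theoretic identity $A\parallel B\parallel C = C$ whenever $C\subseteq A\parallel B$: the inclusion $C\subseteq A\parallel B\parallel C$ is trivial because $A\parallel B\parallel C = (A\parallel B)\cap P^{-1}\!\big(\cdots\big)$ intersected appropriately, and conversely $A\parallel B\parallel C\subseteq C$ always holds by definition of synchronous product over the alphabet of $C$. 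So the ambient language collapses to $L(G_k)$. Finally, controllability of a fixed language $N$ with respect to $L(G_k)$ and $\Sigma_{k,u}$ is literally the statement $\overline{N}\Sigma_{k,u}\cap L(G_k)\subseteq \overline{N}$, which is precisely what we obtain.

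There is essentially no obstacle here, since all the real work was done in Lemma~\ref{lem8}; the corollary is a specialization under the natural choice of coordinator $G_k = P_k(G_1)\parallel P_k(G_2)$, for which $L(G_k) = P_k(L(G_1))\parallel P_k(L(G_2))$ with equality, making the inclusion hypothesis automatic. The only point requiring a sentence of care is the claim that the parallel composition over $\Sigma_k^*$ of the three languages equals $L(G_k)$ under the inclusion; one should note that all languages involved are over alphabets contained in $\Sigma_k$ (the projections map into $\Sigma_k^*$ and $L(G_k)\subseteq\Sigma_k^*$), so the synchronous product is just intersection in $\Sigma_k^*$, and $A\cap B\cap C = C$ when $C\subseteq A\cap B$. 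Thus the proof is a two-line corollary: invoke Lemma~\ref{lem8}, then simplify the ambient language using the hypothesis.
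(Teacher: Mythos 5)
Your proof is correct and takes essentially the same route as the paper: the paper's own proof of this corollary is exactly the one-line observation that $L(G_k)\subseteq P_k(L(G_1))\parallel P_k(L(G_2))$ forces $P_k(L(G_1))\parallel P_k(L(G_2))\parallel L(G_k)=L(G_k)$, so the conclusion of Lemma~\ref{lem8} specializes to controllability with respect to $L(G_k)$ and $\Sigma_{k,u}$. Your extra remarks about the synchronous product over $\Sigma_k$ collapsing to an intersection are just a more careful justification of that same simplification.
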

  \begin{proof}
    The assumption $L(G_k)\subseteq P_k(L(G_1))\parallel P_k(L(G_2))$ implies that $P_k(L(G_1))\parallel P_k(L(G_2))\parallel L(G_k) = L(G_k)$.
  \end{proof}
  
  Finally, as a consequence of Lemma~\ref{lem8} and Theorem~\ref{thmNEW}, we obtain the following result.
  \begin{theorem}\label{thm10}
    Consider the setting of Problem~\ref{problem:controlsynthesis} with $L(G_k)\subseteq P_k(L(G_1))\parallel P_k(L(G_2))$ and the languages defined in~(\ref{eqCC}). Assume that $\supC_{1+k}$ and $\supC_{2+k}$ are nonconflicting. Let $P^{i+k}_k$ be an $(P^{i+k}_i)^{-1}L(G_i)$-observer and OCC (resp. LCC) for $(P^{i+k}_i)^{-1}L(G_i)$, for $i=1,2$. Then 
    \[
      \supC_{1+k} \parallel \supC_{2+k} = \supCC(K, L, (\Sigma_{1,u}, \Sigma_{2,u}, \Sigma_{k,u}))\,, 
    \]
    where $L=L(G_1\|G_2\|G_k)$. \hfill\QED
  \end{theorem}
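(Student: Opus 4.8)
The plan is to obtain Theorem~\ref{thm10} as a straightforward corollary of Lemma~\ref{lem8} and Theorem~\ref{thmNEW}; the genuine content has already been established in those two results, so the proof amounts to checking that their hypotheses match up. First I would invoke Lemma~\ref{lem8}: since $P^{i+k}_k$ is an $(P^{i+k}_i)^{-1}L(G_i)$-observer and OCC (resp. LCC) for $(P^{i+k}_i)^{-1}L(G_i)$, for $i=1,2$, the lemma yields that $P^{1+k}_k(\supC_{1+k})\cap P^{2+k}_k(\supC_{2+k})$ is controllable with respect to $P_k(L(G_1))\parallel P_k(L(G_2))\parallel L(G_k)$ and $\Sigma_{k,u}$. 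Then I would use the extra hypothesis $L(G_k)\subseteq P_k(L(G_1))\parallel P_k(L(G_2))$, which forces $P_k(L(G_1))\parallel P_k(L(G_2))\parallel L(G_k) = L(G_k)$ (this is exactly the reduction used in the corollary following Lemma~\ref{lem8}), so that in fact $P^{1+k}_k(\supC_{1+k})\cap P^{2+k}_k(\supC_{2+k})$ is controllable with respect to $L(G_k)$ and $\Sigma_{k,u}$.

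Next I would note the bookkeeping identification $P^{i+k}_k(\supC_{i+k})=P_k(\supC_{i+k})$ for $i=1,2$: the language $\supC_{i+k}$ lives over the alphabet $\Sigma_i\cup\Sigma_k$, and on that alphabet the global projection $P_k$ restricts to $P^{i+k}_k$. Hence the controllability conclusion of the previous paragraph is precisely the statement that $P_k(\supC_{1+k})\cap P_k(\supC_{2+k})$ is controllable with respect to $L(G_k)$ and $\Sigma_{k,u}$. Together with the standing assumption that $\supC_{1+k}$ and $\supC_{2+k}$ are nonconflicting, these are exactly the two hypotheses required by Theorem~\ref{thmNEW}. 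Applying Theorem~\ref{thmNEW} then gives $\supC_{1+k}\parallel\supC_{2+k}=\supCC(K,L,(\Sigma_{1,u},\Sigma_{2,u},\Sigma_{k,u}))$ with $L=L(G_1\|G_2\|G_k)$, which is the assertion of the theorem.

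I do not expect a real obstacle: the difficulty has been front-loaded into Lemma~\ref{lem8} (the observer plus OCC/LCC argument that transports controllability through the coordinator projection $P_k$, via Lemma~\ref{obsComposition} and Lemma~\ref{lemma:Wonham}) and into Theorem~\ref{thmNEW} (the conditional-controllability argument together with the transitivity-of-controllability machinery). The only points needing a line of care are the two reductions mentioned above, namely the synchronous-product simplification $P_k(L(G_1))\parallel P_k(L(G_2))\parallel L(G_k)=L(G_k)$ under the inclusion hypothesis on $L(G_k)$, and the alphabet restriction identifying $P^{i+k}_k$ with $P_k$ on $\supC_{i+k}$; both are immediate from the definitions of projection and synchronous product. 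With these in place the proof is complete.
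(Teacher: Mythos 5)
Your proposal is correct and follows exactly the route the paper intends: Theorem~\ref{thm10} is stated there as a direct consequence of Lemma~\ref{lem8} (reduced via the hypothesis $L(G_k)\subseteq P_k(L(G_1))\parallel P_k(L(G_2))$, as in the corollary following that lemma) combined with Theorem~\ref{thmNEW}. The two bookkeeping reductions you flag are indeed the only things to check, and you handle them correctly.
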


%%%%%%%%%%%%%%%%%%%%%%%%%%%%%%%%%%%%%%%%%%%%%%%%%%%%%%%%%%%%%%%%%%%%%%%%%%%%%%%%
\section{Coordination Control with Partial Observations}\label{sec:co}
  In this section, we study coordination control of modular DES, where both the coordinator supervisor and the local supervisors have incomplete (partial) information about occurrences of their events and, hence, they do not know the exact state of the coordinator and the local plants.

  The contribution of this section is twofold. First, basic concepts of conditional observability and conditional normality are simplified in a similar way as it has been done in~\cite{JDEDS}. Then, we propose new sufficient conditions for a distributed computation of the supremal conditionally normal and conditionally controllable sublanguage. In particular, a weaker condition is presented that combines the weaker condition for distributed computation of the supremal conditionally controllable sublanguage presented in Section~\ref{sec:cc} with a similar condition for computation of the supremal conditionally normal sublanguage. Furthermore, a stronger condition is presented that is easy to check and that works also for non-prefix-closed specifications.
% and revisit the conditions under which it coincides with the supremal monolithic solution, i.e.  the supremal controllable and normal sublanguage..
  
\subsection{Conditional Observability}
  For coordination control with partial observations, the notion of conditional observability is of the same importance as observability for monolithic supervisory control theory with partial observations.
  
  \begin{definition}\label{def:conditionalobservability}
    Let $G_1$ and $G_2$ be generators over $\Sigma_1$ and $\Sigma_2$, respectively, and let $G_k$ be a coordinator over $\Sigma_k$. A language $K\subseteq L_m(G_1\| G_2\| G_k)$ is {\em conditionally observable\/} with respect to generators $G_1$, $G_2$, $G_k$, controllable sets $\Sigma_{1,c}$, $\Sigma_{2,c}$, $\Sigma_{k,c}$, and projections $Q_{1+k}$, $Q_{2+k}$, $Q_{k}$, where $Q_i: \Sigma_i^*\to \Sigma_{i,o}^*$, for $i=1+k,2+k,k$, if
    \begin{enumerate}
      \item $P_k(K)$ is observable with respect to $L(G_k)$, $\Sigma_{k,c}$, $Q_{k}$,
      \item $P_{1+k}(K)$ is observable with respect to $L(G_1) \parallel \overline{P_k(K)}$, $\Sigma_{1+k,c}$, $Q_{1+k}$,
      \item $P_{2+k}(K)$ is observable with respect to $L(G_2) \parallel \overline{P_k(K)}$, $\Sigma_{2+k,c}$, $Q_{2+k}$, 
    \end{enumerate}
    where $\Sigma_{i+k,c}=\Sigma_c \cap (\Sigma_i \cup \Sigma_k)$, for $i=1,2$.
    $\hfill\triangleleft$
  \end{definition}
  
  Analogously to the notion of $L_m(G)$-closed languages, we recall the notion of conditionally-closed languages defined in~\cite{ifacwc2011}. A nonempty language $K$ over $\Sigma$ is {\em conditionally closed\/} with respect to generators $G_1$, $G_2$, $G_k$ if
  \begin{enumerate}
    \item\label{ccl1} $P_k(K)$ is $L_m(G_k)$-closed,
    \item\label{ccl2} $P_{1+k}(K)$ is $L_m(G_1) \parallel P_k(K)$-closed,
    \item\label{ccl3} $P_{2+k}(K)$ is $L_m(G_2) \parallel P_k(K)$-closed.
  \end{enumerate}
  
  We can now formulate the main result for coordination control with partial observation. This is a generalization of a similar result for prefix-closed languages given in~\cite{SCL} stated moreover with the above defined simplified (but equivalent) form of conditional observability.
  \begin{theorem}\label{thm1}
    Consider the setting of Problem~\ref{problem:controlsynthesis}. There exist nonblocking supervisors $S_1$, $S_2$, $S_k$ such that
    \begin{equation}\tag{1}\label{eq:controlsynthesissafety}
        L_m(S_1/[G_1 \| (S_k/G_k)]) \parallel L_m(S_2/[G_2 \| (S_k/G_k)]) = K
    \end{equation}
    if and only if $K$ is
    (i) conditionally controllable with respect generators $G_1$, $G_2$, $G_k$ and $\Sigma_{1,u}$, $\Sigma_{2,u}$, $\Sigma_{k,u}$, 
    (ii) conditionally closed with respect to generators $G_1$, $G_2$, $G_k$, and 
    (iii) conditionally observable with respect to $G_1$, $G_2$, $G_k$, event sets $\Sigma_{1,c}$, $\Sigma_{2,c}$, $\Sigma_{k,c}$, and projections $Q_{1+k}$, $Q_{2+k}$, $Q_{k}$ from $\Sigma_i^*$ to $\Sigma_{i,o}^*$, for $i=1+k,2+k,k$.
  \end{theorem}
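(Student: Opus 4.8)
The plan is to prove the equivalence in two directions, exploiting the decomposition of the global control problem into the coordinator problem and the two local problems. For the "only if" direction, suppose the nonblocking supervisors $S_1$, $S_2$, $S_k$ exist with $L_m(S_1/[G_1 \| (S_k/G_k)]) \parallel L_m(S_2/[G_2 \| (S_k/G_k)]) = K$. First I would observe that, by conditional decomposability of $K$ and $\overline{K}$ together with properties of projections and the synchronous product, $P_k(K) = L_m(S_k/G_k)$ (or is sandwiched appropriately) and $P_{i+k}(K) = L_m(S_i/[G_i \| (S_k/G_k)])$ for $i=1,2$; the equality of the synchronous product forces each projected component to coincide with the corresponding closed-loop marked language. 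Then the classical controllability/observability/$L_m$-closedness theorem for monolithic supervisory control with partial observation (recalled in the Preliminaries) applied to each of the three closed-loop systems yields exactly clauses (i)--(iii): $P_k(K)$ is controllable, observable, and $L_m(G_k)$-closed with respect to $L(G_k)$ and the relevant event sets and projection $Q_k$, and similarly $P_{i+k}(K)$ is controllable, observable, and closed with respect to $L(G_i)\|\overline{P_k(K)}$ (resp. $L_m(G_i)\|P_k(K)$).

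For the "if" direction, I would construct the three supervisors explicitly. Assuming (i)--(iii), apply the monolithic realization theorem three times: since $P_k(K)$ is controllable, observable with respect to $Q_k$, and $L_m(G_k)$-closed, there is a nonblocking supervisor $S_k$ for $G_k$ with $L_m(S_k/G_k) = P_k(K)$ and $L(S_k/G_k) = \overline{P_k(K)}$; since $P_{i+k}(K)$ is controllable, observable with respect to $Q_{i+k}$, and $L_m(G_i)\|P_k(K)$-closed with respect to the plant $L(G_i)\|\overline{P_k(K)} = L(G_i \| (S_k/G_k))$, there is a nonblocking supervisor $S_i$ for $G_i \| (S_k/G_k)$ with $L_m(S_i/[G_i\|(S_k/G_k)]) = P_{i+k}(K)$. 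It then remains to verify that $P_{1+k}(K) \parallel P_{2+k}(K) = K$, which is exactly conditional decomposability of $K$, and hence \eqref{eq:controlsynthesissafety} holds.

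The main obstacle I anticipate is the bookkeeping around nonblockingness and the prefix-closed versus non-prefix-closed distinction: one has to check that the plant seen by the local supervisor $S_i$, namely $G_i \| (S_k/G_k)$, really has generated language $L(G_i)\|\overline{P_k(K)}$ and marked language $L_m(G_i)\|P_k(K)$, which relies on the nonblockingness of $S_k$ (so that $L(S_k/G_k) = \overline{L_m(S_k/G_k)} = \overline{P_k(K)}$) and on the nonconflictingness implicit in the conditionally-closed hypothesis. One must also confirm that the conditional-closedness clauses are precisely what is needed to invoke $L_m(G)$-closedness in each of the three monolithic applications, and that conditional observability with the simplified projections $Q_{i+k}$, $Q_k$ matches the observability hypothesis of the monolithic theorem; this is where the claim that the simplified form of conditional observability is "equivalent" to the earlier one is used. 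Care is also needed to ensure that the event sets $\Sigma_{i+k,c} = \Sigma_c\cap(\Sigma_i\cup\Sigma_k)$ and the induced uncontrollable sets line up with the monolithic controllability condition for the composed plant. Once these identifications are made, the argument reduces to three applications of a known theorem plus conditional decomposability, so no heavy new computation is required.
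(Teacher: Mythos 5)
Your overall strategy coincides with the paper's: both directions reduce to three applications of the basic existence theorem of monolithic supervisory control under partial observation (one for the coordinator, one for each local plant composed with the controlled coordinator), plus conditional decomposability to reassemble $K$ as $P_{1+k}(K)\parallel P_{2+k}(K)$. Your ``if'' direction is essentially the paper's proof, including the identification of the plant seen by $S_i$ as having generated language $L(G_i)\parallel\overline{P_k(K)}$ and marked language $L_m(G_i)\parallel P_k(K)$ via nonblockingness of $S_k$.

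The one step that would fail as you currently justify it is the sandwiching in the ``only if'' direction. Projecting~(\ref{eq:controlsynthesissafety}) by $P_k$ (resp.\ $P_{i+k}$) and using $P_k(L_1\parallel L_2)=P_k(L_1)\parallel P_k(L_2)$ yields only the inclusions $P_k(K)\subseteq L_m(S_k/G_k)$ and $P_{i+k}(K)\subseteq L_m(S_i/[G_i\parallel(S_k/G_k)])$. The reverse inclusions do \emph{not} follow from conditional decomposability or from~(\ref{eq:controlsynthesissafety}) alone: a fully permissive coordinator supervisor combined with restrictive local supervisors can still achieve $K$ globally, and then $L_m(S_k/G_k)$ can strictly contain $P_k(K)$, so $P_k(K)$ need not inherit controllability or observability from the closed loop. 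What closes the sandwich are the constraints built into the setting of Problem~\ref{problem:controlsynthesis}, namely $L_m(S_k/G_k)\subseteq P_k(K)$ and $L_m(S_i/[G_i\parallel(S_k/G_k)])\subseteq P_{i+k}(K)$; this is exactly what the paper invokes at this point. Once you cite those requirements instead of conditional decomposability, the equalities $L_m(S_k/G_k)=P_k(K)$ and $L_m(S_i/[G_i\parallel(S_k/G_k)])=P_{i+k}(K)$ hold and the rest of your argument goes through unchanged.
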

  \begin{proof}
    (If) Since $K\subseteq L_m(G_1\| G_2\| G_k)$, we have $P_k(K) \subseteq L_m(G_k)$ is controllable with respect to $L(G_k)$ and $\Sigma_{k,u}$, $L_m(G_k)$-closed, and observable with respect to $L(G_k)$, $\Sigma_{k,c}$, and $Q_k$. It follows, see~\cite{CL08}, that there exists a nonblocking supervisor $S_k$ such that $L_m(S_k/G_k)=P_k(K)$. Similarly, we have $P_{1+k}(K) \subseteq L_m(G_1)\parallel L_m(G_k)$ and $P_{1+k}(K) \subseteq (P_{k}^{1+k})^{-1} P_k(K)$, hence $P_{1+k}(K) \subseteq L_m(G_1) \parallel L_m(G_k) \parallel P_k(K) =  L_m(G_1) \parallel P_k(K) =  L_m(G_1) \parallel L_m(S_k/G_k)$. This, together with the assumption that $K$ is conditionally controllable, conditionally closed, and conditionally observable imply, see~\cite{CL08}, that there exists a nonblocking supervisor $S_{1}$ such that $L_m(S_1/ [ G_1 \| (S_k/G_k)]) = P_{1+k}(K)$. A similar argument shows that there exists a nonblocking supervisor $S_2$ such that $L_m(S_2/ [ G_2 \| (S_k/G_k)]) = P_{2+k}(K)$. Since $K$ is conditionally decomposable, $L_m(S_1/[G_1 \| (S_k/G_k)]) \parallel L_m(S_2/[G_2 \| (S_k/G_k)]) =  P_{1+k}(K) \parallel P_{2+k}(K) = K$.

    (Only if) To prove this direction, projections $P_k$, $P_{1+k}$, $P_{2+k}$ are applied to~(\ref{eq:controlsynthesissafety}). The closed-loop languages can be written as synchronous products, thus (\ref{eq:controlsynthesissafety}) can be written as $K = L_m(S_1) \parallel L_m(G_1) \parallel L_m(S_k) \parallel L_m(G_k) \parallel L_m(S_2) \parallel L_m(G_2) \parallel L_m(S_k) \parallel L_m(G_k)$, which gives $P_k(K) \subseteq L_m(S_k) \parallel L_m(G_k) =  L_m(S_k/G_k)$. On the other hand, $L_m(S_k/G_k) \subseteq P_k(K)$, see Problem~\ref{problem:controlsynthesis}, hence $L_m(S_k/G_k) = P_k(K)$, which means, according to the basic theorem of supervisory control~\cite{CL08}, that $P_k(K)$ is controllable with respect to $L(G_k)$ and $\Sigma_{k,u}$, $L_m(G_k)$-closed, and observable with respect to $L(G_k)$, $\Sigma_{k,c}$, and $Q_k$. Now, the application of $P_{1+k}$ to (\ref{eq:controlsynthesissafety}) gives $P_{1+k}(K) \subseteq L_m(S_1/ [ G_1 \| (S_k/G_k)]) \subseteq P_{1+k}(K)$. According to the basic theorem of supervisory control, $P_{1+k}(K)$ is controllable with respect to $L(G_1 \| (S_k/G_k))$ and $\Sigma_{1+k,u}$, $L_m(G_1\|(S_k/G_k))$-closed, and observable with respect to $L(G_1\| (S_k/G_k))$, $\Sigma_{1+k,c}$, and $Q_{1+k}$. Similarly, $P_{2+k}(K)$ is controllable with respect to $L(G_2 \| (S_k/G_k))$ and $\Sigma_{2+k,u}$, $L_m(G_2\|(S_k/G_k))$-closed, and observable with respect to $L(G_2\| (S_k/G_k))$, $\Sigma_{2+k,c}$, and $Q_{2+k}$, which was to be shown.
  \end{proof}

%%%%%%%%%%%%%%%%%%%%%%%%%%%%%%%%%%%%%%%%%%%%%%%%%%%%%%%%%%%%%%%%%%%%%%%%%%%%%%%%
\subsection{Conditional normality}\label{sec:cn}
  It is well known that supremal observable sublanguages do not exist in general and it is also the case of conditionally observable sublanguages. Therefore, a stronger concept of language normality has been introduced.
  
  Let $G$ be a generator over $\Sigma$, and let $P:\Sigma^* \to \Sigma_o^*$ be a projection. A language $K\subseteq L_m(G)$ is {\em normal\/} with respect to $L(G)$ and $P$ if $\overline{K} = P^{-1}P(\overline{K})\cap L(G)$. It is known that normality implies observability~\cite{CL08}.
  
  \begin{definition}\label{def:conditionalnormality}
    Let $G_1$ and $G_2$ be generators over $\Sigma_1$ and $\Sigma_2$, respectively, and let $G_k$ be a coordinator over $\Sigma_k$. A language $K\subseteq L_m(G_1\| G_2\| G_k)$ is {\em conditionally normal\/} with respect to generators $G_1, G_2, G_k$ and projections $Q_{1+k}, Q_{2+k}$, $Q_{k}$, where $Q_i: \Sigma_i^*\to \Sigma_{i,o}^*$, for $i=1+k,2+k,k$, if
    \begin{enumerate}
      \item $P_k(K)$ is normal with respect to $L(G_k)$ and $Q_{k}$,
      \item $P_{1+k}(K)$ is normal with respect to $L(G_1) \parallel \overline{P_k(K)}$ and $Q_{1+k}$,
      \item $P_{2+k}(K)$ is normal with respect to $L(G_2) \parallel \overline{P_k(K)}$ and $Q_{2+k}$. $\hfill\triangleleft$
    \end{enumerate}
  \end{definition}
  
  The following result is an immediate application of conditional normality in coordination control.
  \begin{theorem}\label{thm0}
    Consider the setting of Problem~\ref{problem:controlsynthesis}. If the specification $K$ is conditionally controllable with respect to $G_1, G_2, G_k$ and $\Sigma_{1,u}, \Sigma_{2,u}, \Sigma_{k,u}$, conditionally closed with respect to $G_1, G_2$, $G_k$, and conditionally normal with respect to $G_1, G_2, G_k$ and projections $Q_{1+k}, Q_{2+k}, Q_{k}$ from $\Sigma_i^*$ to $\Sigma_{i,o}^*$, for $i=1+k,2+k,k$, then there exist nonblocking supervisors $S_1$, $S_2$, $S_k$ such that 
    \[
      L_m(S_1/[G_1 \| (S_k/G_k)]) \parallel L_m(S_2/[G_2 \| (S_k/G_k)]) = K\,.
    \]
  \end{theorem}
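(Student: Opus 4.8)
The plan is to reduce Theorem~\ref{thm0} to Theorem~\ref{thm1} by showing that, under the stated hypotheses, the specification $K$ is conditionally observable, so that all three assumptions of Theorem~\ref{thm1} are satisfied.

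First I would invoke the classical fact, recalled just before Definition~\ref{def:conditionalnormality}, that normality implies observability: if a language $M\subseteq L_m(G)$ is normal with respect to $L(G)$ and a projection $Q$, then $M$ is observable with respect to $L(G)$, $Q$, and any set of controllable events (see~\cite{CL08}). I would then apply this implication componentwise to the three items of Definition~\ref{def:conditionalnormality}. From item~(1), $P_k(K)$ normal with respect to $L(G_k)$ and $Q_k$ gives $P_k(K)$ observable with respect to $L(G_k)$, $\Sigma_{k,c}$, and $Q_k$. From item~(2), $P_{1+k}(K)$ normal with respect to $L(G_1)\parallel\overline{P_k(K)}$ and $Q_{1+k}$ gives $P_{1+k}(K)$ observable with respect to $L(G_1)\parallel\overline{P_k(K)}$, $\Sigma_{1+k,c}$, and $Q_{1+k}$; and item~(3) analogously yields the observability of $P_{2+k}(K)$ with respect to $L(G_2)\parallel\overline{P_k(K)}$, $\Sigma_{2+k,c}$, and $Q_{2+k}$. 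Hence $K$ is conditionally observable with respect to $G_1,G_2,G_k$, the controllable sets $\Sigma_{1,c},\Sigma_{2,c},\Sigma_{k,c}$, and the projections $Q_{1+k},Q_{2+k},Q_k$, in the sense of Definition~\ref{def:conditionalobservability}.

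Finally, since $K$ is by hypothesis conditionally controllable with respect to $G_1,G_2,G_k$ and $\Sigma_{1,u},\Sigma_{2,u},\Sigma_{k,u}$, conditionally closed with respect to $G_1,G_2,G_k$, and (as just shown) conditionally observable, the ``if'' direction of Theorem~\ref{thm1} applies verbatim and delivers nonblocking supervisors $S_1,S_2,S_k$ with $L_m(S_1/[G_1\|(S_k/G_k)])\parallel L_m(S_2/[G_2\|(S_k/G_k)])=K$, which is exactly the assertion of Theorem~\ref{thm0}.

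I do not expect a genuine obstacle here: the argument is essentially a translation step. The only point requiring care is to check that in each of the three normality conditions the reference plant language and the observation projection (and, for the resulting observability statement, the controllable-event set) coincide exactly with those appearing in the corresponding clause of conditional observability, so that the ``normality $\Rightarrow$ observability'' implication can be applied locally, one clause at a time, without any change of reference language. Since Definition~\ref{def:conditionalnormality} and Definition~\ref{def:conditionalobservability} are set up with precisely the same languages $L(G_k)$, $L(G_1)\parallel\overline{P_k(K)}$, $L(G_2)\parallel\overline{P_k(K)}$ and the same projections $Q_k,Q_{1+k},Q_{2+k}$, this matching is immediate.
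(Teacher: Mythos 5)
Your proposal is correct and follows exactly the paper's own argument: the paper's proof is the one-line observation that normality implies observability (applied clause by clause to Definition~\ref{def:conditionalnormality} to obtain conditional observability), after which Theorem~\ref{thm1} gives the result. Your version merely spells out the componentwise matching of reference languages and projections, which is indeed immediate from the way the two definitions are set up.
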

  \medskip
  \begin{proof}
    As normality implies observability, the proof follows immediately from Theorem~\ref{thm1}.
  \end{proof}
  
  The following result was proved for prefix-closed languages in~\cite{SCL}. Here we generalize it for not necessarily prefix-closed languages.
  \begin{theorem}\label{existence2}
    The supremal conditionally normal sublanguage always exists and equals to the union of all conditionally normal sublanguages.
  \end{theorem}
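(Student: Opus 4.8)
The plan is to show that conditional normality is preserved under arbitrary unions, so that the union of all conditionally normal sublanguages of $K$ is itself conditionally normal, hence supremal. The key observation is that conditional normality is a conjunction of three ordinary normality conditions on the projected languages $P_k(K)$, $P_{1+k}(K)$, $P_{2+k}(K)$, together with the (automatic) membership $K\subseteq L_m(G_1\|G_2\|G_k)$. However, one must be careful: ordinary normality of a language $N$ with respect to a fixed plant language and projection is preserved under unions, but here the ``plant'' languages in conditions~(2) and~(3) of Definition~\ref{def:conditionalnormality} depend on the sublanguage itself through $\overline{P_k(K)}$ — wait, no: they depend on $\overline{P_k(N)}$ where $N$ ranges over candidate sublanguages, so the reference language varies with $N$. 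This is the main obstacle and must be handled with care.

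First I would fix notation: let $\{N_j\}_{j\in J}$ be the family of all conditionally normal sublanguages of $K$ (with respect to the given generators and projections), and set $N=\bigcup_{j\in J} N_j$. Since each $N_j\subseteq K\subseteq L_m(G_1\|G_2\|G_k)$, we have $N\subseteq L_m(G_1\|G_2\|G_k)$, so $N$ is a legitimate candidate. I would then verify the three normality conditions for $N$. For condition~(1): each $P_k(N_j)$ is normal with respect to $L(G_k)$ and $Q_k$; since projection commutes with union, $P_k(N)=\bigcup_j P_k(N_j)$, and normality with respect to a fixed plant and projection is closed under unions (the supremal normal sublanguage is the union of all normal sublanguages, cf.~\cite{CL08}), so $P_k(N)$ is normal with respect to $L(G_k)$ and $Q_k$.

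For conditions~(2) and~(3), the subtlety is that $P_{1+k}(N_j)$ is normal with respect to $L(G_1)\parallel\overline{P_k(N_j)}$, and these reference languages differ across $j$. The plan to resolve this is monotonicity: if $N_j\subseteq N$ then $\overline{P_k(N_j)}\subseteq\overline{P_k(N)}$, and one checks that normality of $P_{1+k}(N_j)$ with respect to the smaller plant $L(G_1)\parallel\overline{P_k(N_j)}$, combined with the fact that $\overline{P_{1+k}(N_j)}\subseteq L(G_1)\parallel\overline{P_k(N)}$ and that this larger language, intersected with $(P_k^{1+k})^{-1}\overline{P_k(N_j)}$, returns the smaller one, upgrades to normality with respect to $L(G_1)\parallel\overline{P_k(N)}$ — here I would use that $P_k(N)$ is normal with respect to $L(G_k)$ (just established) to relate $\overline{P_k(N)}$-normality back through the transitivity-type argument, analogous to the controllability transitivity used in the proof of Theorem~\ref{thmNEW} (Lemma~\ref{feng}, Lemma~\ref{lem_transC}). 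Once each $P_{1+k}(N_j)$ is normal with respect to the common plant $L(G_1)\parallel\overline{P_k(N)}$ and $Q_{1+k}$, closure of normality under unions gives that $P_{1+k}(N)=\bigcup_j P_{1+k}(N_j)$ is normal with respect to $L(G_1)\parallel\overline{P_k(N)}$ and $Q_{1+k}$; symmetrically for $P_{2+k}(N)$. Thus $N$ is conditionally normal, and being a union it contains every conditionally normal sublanguage, so it is supremal. The hardest step is the ``plant-upgrade'' argument for conditions~(2)–(3); I expect it to parallel the normality analogue of transitivity of controllability and to require that the nonconflicting/observer machinery is not needed here, only the already-established normality of $P_k(N)$.
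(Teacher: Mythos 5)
Your plan is correct and is essentially the paper's own proof: the paper expands the union into cross-terms and, for $x\in Q_{1+k}^{-1}Q_{1+k}P_{1+k}(\overline{K_i})\cap \bigl(L(G_1)\,\|\,P_k(\overline{K_j})\bigr)$, uses coordinator-level normality to pull $P_k^{1+k}(x)$ back into $\overline{P_k(K_i)}$ and then applies local normality of $P_{1+k}(K_i)$ --- which is precisely your ``plant-upgrade'' step, stated per pair of indices rather than per index. The only slip is that the ingredient needed for that pull-back is normality of the \emph{individual} $P_k(N_j)$ with respect to $L(G_k)$ and $Q_k$ (condition~(1) of Definition~\ref{def:conditionalnormality} applied to $N_j$), not the just-established normality of $P_k(N)$, which would only yield $P_k^{1+k}(x)\in\overline{P_k(N)}$ --- something you already know.
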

  \begin{proof}
    We show that conditional normality is preserved under union. Let $I$ be an index set, and let $K_i$ be conditionally normal sublanguages of $K\subseteq L_m(G_1\|G_2\|G_k)$ with respect to generators $G_1$, $G_2$, $G_k$ and projections $Q_{1+k}$, $Q_{2+k}$, $Q_{k}$ to local observable event sets, for $i\in I$. We prove that $\bigcup_{i\in I} K_i$ is conditionally normal with respect to those generators and natural projections.

    i) $P_k(\bigcup_{i\in I} K_i)$ is normal with respect to $L(G_k)$ and $Q_{k}$ because $Q_k^{-1}Q_kP_k(\overline{\bigcup_{i\in I} K_i}) \cap L(G_k) = \bigcup_{i\in I} (Q_k^{-1}Q_kP_k(\overline{K_i}) \cap L(G_k)) = \bigcup_{i\in I} P_k(\overline{K_i}) = P_k(\overline{\bigcup_{i\in I}K_i}) = P_k(\bigcup_{i\in I}\overline{K_i})$, where the second equality is by normality of $P_k(K_i)$ with respect to $L(G_k)$ and $Q_{k}$, for $i\in I$.

    ii) Note that $Q_{1+k}^{-1}Q_{1+k}P_{1+k}(\overline{\cup_{i\in I} K_i}) \cap L(G_1) \| P_k(\overline{\cup_{i\in I} K_i}) = \cup_{i\in I} (Q_{1+k}^{-1}Q_{1+k}P_{1+k}(\overline{K_i})) \cap \cup_{i\in I} (L(G_1)\| P_k(\overline{K_i})) = \cup_{i\in I} \cup_{j\in I} (Q_{1+k}^{-1}Q_{1+k}P_{1+k}(\overline{K_i}) \cap L(G_1)\| P_k(\overline{K_j}))$ and $P_{1+k}(\overline{\cup_{i\in I} K_i})\subseteq Q_{1+k}^{-1}Q_{1+k}P_{1+k}(\overline{\cup_{i\in I} K_i}) \cap L(G_1) \| P_k(\overline{\cup_{i\in I} K_i})$. For the sake of contradiction, assume that there exist indexes $i\neq j$ in $I$ such that $Q_{1+k}^{-1}Q_{1+k}P_{1+k}(\overline{K_i}) \cap L(G_1)\|P_k(\overline{K_j})\not\subseteq P_{1+k}(\overline{\cup_{i\in I} K_i})$. Then the left-hand side must be nonempty, which implies that there exists $x\in Q_{1+k}^{-1}Q_{1+k}P_{1+k}(\overline{K_i})\cap L(G_1)\|P_k(\overline{K_j})$ and $x \notin P_{1+k}(\overline{\cup_{i\in I} K_i})$. As $x\in Q_{1+k}^{-1}Q_{1+k}P_{1+k}(\overline{K_i})$, there exists $w\in \overline{K_i}$ such that $Q_{1+k}(x)=Q_{1+k}P_{1+k}(w)$. Applying the projection $P_k':\Sigma_{1+k,o}^*\to \Sigma_{k,o}^*$, we get that $P_k'Q_{1+k}(x)=P_k'Q_{1+k}P_{1+k}(w)$. As $Q_kP_k^{1+k}=P_k'Q_{1+k}$ and $Q_kP_k=P_k'Q_{1+k}P_{1+k}$ (see Fig.~\ref{fig0}), 
    \begin{figure}
      \centering
      \includegraphics[scale=1]{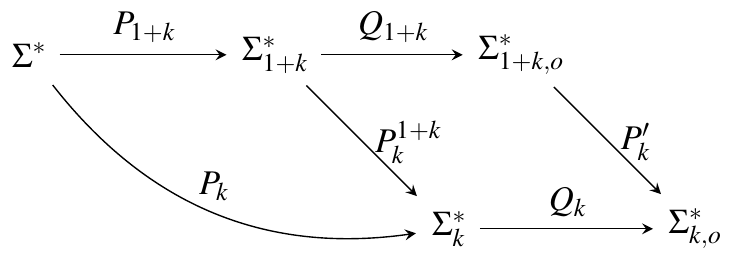}
      \caption{A commutative diagram of the natural projections.}
      \label{fig0}
    \end{figure}
    we have $Q_kP_k^{1+k}(x)=Q_kP_k(w)$, that is, $P_k^{1+k}(x)\in Q_{k}^{-1}Q_{k}P_{k}(\overline{K_i})$. Since $P_k^{1+k}(x)\in P_k(\overline{K_j})\subseteq L(G_k)$, the normality of $P_k(K_i)$ with respect to $L(G_k)$ and $Q_k$ gives that $P_k^{1+k}(x)\in P_{k}(\overline{K_i})$. But then $x\in L(G_1)\|P_k(\overline{K_i})$, and normality of $P_{1+k}(K_i)$ implies that $x\in P_{1+k}(\overline{K_i})\subseteq P_{1+k}(\overline{\cup_{i\in I} K_i})$, which is a contradiction.

    iii) As the last item of the definition is proven in the same way, the theorem holds.
  \end{proof}

  Given generators $G_1$, $G_2$, and $G_k$, let
  \[
    \supccn(K,L,(\Sigma_{1,u},\Sigma_{2,u},\Sigma_{k,u}),(Q_{1+k},Q_{2+k},Q_{k}))
  \]
  denote the supremal conditionally controllable and conditionally normal sublanguage of the specification language $K$ with respect to the plant language $L=L(G_1\| G_2\| G_k)$, the sets of uncontrollable events $\Sigma_{1,u}$, $\Sigma_{2,u}$, $\Sigma_{k,u}$, and projections $Q_{1+k}$, $Q_{2+k}$, $Q_{k}$, where $Q_i:\Sigma_i^* \to \Sigma^*_{i,o}$, for $i=1+k,2+k,k$. 
  
  In the sequel, the computation of the supremal conditionally controllable and conditionally normal sublanguage is investigated. In the same way as in~\cite{SCL}, the following notation is adopted.
  
  Consider the setting of Problem~\ref{problem:controlsynthesis} and define the languages as shown in Fig.~\ref{figCN},
  \begin{figure*}
    \begin{equation}\label{eqCN}
      \boxed{
      \begin{aligned}
        \supcn_k     & = \supcn(P_k(K), L(G_k), \Sigma_{k,u},Q_k)\\
        \supcn_{1+k} & = \supcn(P_{1+k}(K), L(G_1) \| \overline{\supcn_k}, \Sigma_{1+k,u},Q_{1+k})\\
        \supcn_{2+k} & = \supcn(P_{2+k}(K), L(G_2) \| \overline{\supcn_k}, \Sigma_{2+k,u},Q_{2+k})
      \end{aligned}}
    \end{equation}
    \caption{Definition of supremal controllable and normal sublanguages.}
    \label{figCN}
  \end{figure*}
  where $\supcn(K,L,\Sigma_u,Q)$ denotes the supremal controllable and normal sublanguage of $K$ with respect to $L$, $\Sigma_u$, and $Q$. We recall that the supremal controllable and normal sublanguage always exists and equals the union of all controllable and normal sublanguages of $K$, cf.~\cite{CL08}.
  
  \begin{theorem}[\cite{SCL}]\label{thm2}
    Consider the setting of Problem~\ref{problem:controlsynthesis} with a prefix-closed specification $K$ and the languages defined in~(\ref{eqCN}). Let $P^{i+k}_k$ be an $(P^{i+k}_i)^{-1}L(G_i)$-observer and OCC (resp. LCC) for $(P^{i+k}_i)^{-1}L(G_i)$, for $i=1,2$. Assume that the language $P^{1+k}_k(\supcn_{1+k})\cap P^{2+k}_k(\supcn_{2+k})$ is normal with respect to $L(G_k)$ and $Q_k$. Then 
    \begin{multline*}
      \supcn_{1+k} \parallel \supcn_{2+k} \\ = \supccn(K, L, (\Sigma_{1,u}, \Sigma_{2,u}, \Sigma_{k,u}), (Q_{1+k},Q_{2+k},Q_{k}))\,, 
    \end{multline*}
    where $L=L(G_1\|G_2\|G_k)$.
    \hfill\QED
  \end{theorem}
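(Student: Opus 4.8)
The plan is to prove the two inclusions of the claimed equality separately, following the template of the proof of Theorem~\ref{thmNEW} but carrying a normality requirement alongside the controllability one at every step; the only new ingredient relative to Theorem~\ref{thmNEW} is the assumed normality of $P^{1+k}_k(\supcn_{1+k})\cap P^{2+k}_k(\supcn_{2+k})$, which plays, for normality, the role that the assumed controllability of that same intersection plays in Theorem~\ref{thmNEW}. Write $\supccn$ for the right-hand side and $M=\supcn_{1+k}\parallel\supcn_{2+k}$. Since $K$ is prefix-closed, so are $\supcn_k$, $\supcn_{1+k}$, $\supcn_{2+k}$, and $M$; in particular $\supcn_{1+k}$ and $\supcn_{2+k}$ are automatically nonconflicting, which makes the identity $\overline{M}=\overline{\supcn_{1+k}}\parallel\overline{\supcn_{2+k}}$ available for free.

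For $M\subseteq\supccn$ I would first note $\supcn_{i+k}\subseteq P_{i+k}(K)$, so conditional decomposability gives $M\subseteq P_{1+k}(K)\parallel P_{2+k}(K)=K$, and it then suffices to check that $M$ is conditionally controllable and conditionally normal with respect to the stated data. At the coordinator level, $\Sigma_{1+k}\cap\Sigma_{2+k}=\Sigma_k$ together with Lemma~\ref{lemma:Wonham} gives $P_k(M)=P^{1+k}_k(\supcn_{1+k})\cap P^{2+k}_k(\supcn_{2+k})$; this is normal with respect to $L(G_k)$ and $Q_k$ by hypothesis, and controllable with respect to $L(G_k)$ and $\Sigma_{k,u}$ by exactly the observer/OCC (resp. LCC) argument of Lemma~\ref{lem8} (the observer and consistency hypotheses on $P^{i+k}_k$ are precisely what is needed to lift an uncontrollable continuation allowed by $L(G_k)$ back along the projection into $\overline{\supcn_{1+k}}\parallel\overline{\supcn_{2+k}}$). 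At the component level I would mimic Theorem~\ref{thmNEW}: using $P_{1+k}(M)=\supcn_{1+k}\parallel P^{2+k}_k(\supcn_{2+k})=\supcn_{1+k}\parallel\big[P^{1+k}_k(\supcn_{1+k})\parallel P^{2+k}_k(\supcn_{2+k})\big]$, the inclusion $P^{i+k}_k(\supcn_{i+k})\subseteq\supcn_k$, the defining controllability and normality of $\supcn_{1+k}$ with respect to $L(G_1)\parallel\overline{\supcn_k}$ and $Q_{1+k}$, and Lemma~\ref{feng} together with its normality analogue (plus the nonconflictingness of $\supcn_{1+k}$ with $P^{1+k}_k(\supcn_{1+k})\parallel P^{2+k}_k(\supcn_{2+k})$), I would conclude that $P_{1+k}(M)$ is controllable and normal with respect to $[L(G_1)\parallel\overline{\supcn_k}]\parallel\overline{P_k(M)}=L(G_1)\parallel\overline{P_k(M)}$; the case $i=2$ is symmetric. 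Hence $M$ is conditionally controllable and conditionally normal, so $M\subseteq\supccn$.

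For $\supccn\subseteq M$ it suffices, by Lemma~\ref{lem11}, to show $P_{i+k}(\supccn)\subseteq\supcn_{i+k}$ for $i=1,2$. Take $i=1$. Conditional controllability and conditional normality of $\supccn$ make $P_k(\supccn)$ controllable and normal with respect to $L(G_k)$ and $Q_k$, hence $P_k(\supccn)\subseteq\supcn_k$, so $\overline{P_k(\supccn)}\subseteq\overline{\supcn_k}$ and (by transitivity of controllability, resp. of normality) $\overline{P_k(\supccn)}$ is controllable and normal with respect to $\overline{\supcn_k}$; consequently $L(G_1)\parallel\overline{P_k(\supccn)}$ is controllable and normal with respect to $L(G_1)\parallel\overline{\supcn_k}$ by Lemma~\ref{feng} and its normality analogue. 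Since $P_{1+k}(\supccn)$ is controllable and normal with respect to $L(G_1)\parallel\overline{P_k(\supccn)}$ (again by conditional controllability/normality of $\supccn$), transitivity of controllability (Lemma~\ref{lem_transC}) and of normality give that $P_{1+k}(\supccn)$ is controllable and normal with respect to $L(G_1)\parallel\overline{\supcn_k}$ and $Q_{1+k}$; as $P_{1+k}(\supccn)\subseteq P_{1+k}(K)$, supremality forces $P_{1+k}(\supccn)\subseteq\supcn_{1+k}$. The case $i=2$ is the same, so $\supccn\subseteq M$ and the proof is complete.

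I expect the coordinator step of the first inclusion to be the main obstacle: establishing controllability of $P^{1+k}_k(\supcn_{1+k})\cap P^{2+k}_k(\supcn_{2+k})$ with respect to $L(G_k)$ itself, rather than with respect to the smaller language $P_k(L(G_1))\parallel P_k(L(G_2))\parallel L(G_k)$ that Lemma~\ref{lem8} delivers. This is where the $(P^{i+k}_i)^{-1}L(G_i)$-observer and OCC/LCC hypotheses are genuinely used, and where one must be careful that every uncontrollable one-step extension allowed by $L(G_k)$ is already allowed by $P_k(L)$ — automatic for the canonical coordinator $G_k=P_k(G_1)\parallel P_k(G_2)$. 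The only other non-routine task is to record the normality analogues of Lemma~\ref{feng} and of the transitivity of controllability invoked above (these are standard for normal languages, using that a natural projection being an observer behaves well under inverse projection); everything else is the controllability bookkeeping of Theorem~\ref{thmNEW} run in parallel with a normality copy.
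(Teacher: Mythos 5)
Your proof is correct in substance and takes essentially the route the paper itself suggests for this result: the paper states Theorem~\ref{thm2} without proof (it is imported from~\cite{SCL}), but positions it as a consequence of the weaker-hypothesis Theorem~\ref{thm2b} together with a Lemma~\ref{lem8}-style argument showing that the observer and OCC/LCC hypotheses yield controllability of the intersection $P^{1+k}_k(\supcn_{1+k})\cap P^{2+k}_k(\supcn_{2+k})$. Your two-inclusion structure, the use of Lemma~\ref{lemma:Wonham} at the coordinator level, Lemma~\ref{feng} and Lemma~\ref{normalitaComposition} at the component level, and Lemmas~\ref{lem_transC}, \ref{lem_trans}, \ref{lem11} for the converse inclusion all match the paper's proof of Theorem~\ref{thm2b}; your observation that prefix-closedness of $K$ makes the nonconflictingness hypothesis automatic is also correct and is precisely why Theorem~\ref{thm2b} needs to assume it while Theorem~\ref{thm2} does not.

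The one point that deserves to be stated as an explicit hypothesis rather than a remark is the one you yourself flag: the observer/OCC (resp.\ LCC) argument of Lemma~\ref{lem8} only yields controllability of $P^{1+k}_k(\supcn_{1+k})\cap P^{2+k}_k(\supcn_{2+k})$ with respect to $P_k(L(G_1))\parallel P_k(L(G_2))\parallel L(G_k)$, whereas conditional controllability of $M$ requires controllability with respect to $L(G_k)$ itself, which is a genuinely stronger property when $L(G_k)\not\subseteq P_k(L(G_1))\parallel P_k(L(G_2))$. The paper is explicit about this gap in the complete-observation case: the corollary following Lemma~\ref{lem8} and Theorem~\ref{thm10} both add the hypothesis $L(G_k)\subseteq P_k(L(G_1))\parallel P_k(L(G_2))$, which holds for the canonical coordinator $G_k=P_k(G_1)\parallel P_k(G_2)$ but not for an arbitrary one. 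As Theorem~\ref{thm2} is transcribed here, that hypothesis is absent, so your argument (and, one presumes, the original in~\cite{SCL}) implicitly relies on the canonical construction of the coordinator; you should either add that assumption or restrict to $G_k=P_k(G_1)\parallel P_k(G_2)$. With that proviso, the remaining steps are sound.
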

  
  We can now further improve the above result as follows.
  \begin{theorem}\label{thm2b}
    Consider the setting of Problem~\ref{problem:controlsynthesis} and the languages defined in~(\ref{eqCN}). Assume that $\supcn_{1+k}$ and $\supcn_{2+k}$ are nonconflicting and that $P^{1+k}_k(\supcn_{1+k})\cap P^{2+k}_k(\supcn_{2+k})$ is controllable and normal with respect to $L(G_k)$, $\Sigma_{k,u}$, and $Q_k$. Then 
    \begin{multline*}
      \supcn_{1+k} \parallel \supcn_{2+k} \\ = \supccn(K, L, (\Sigma_{1,u}, \Sigma_{2,u}, \Sigma_{k,u}), (Q_{1+k},Q_{2+k},Q_{k}))\,, 
    \end{multline*}
    where $L=L(G_1\|G_2\|G_k)$.
  \end{theorem}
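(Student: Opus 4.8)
The plan is to follow the two-part structure of the proof of Theorem~\ref{thmNEW}, adding the normality bookkeeping at each step. Write $\supccn$ for the right-hand side and $M=\supcn_{1+k}\parallel\supcn_{2+k}$.

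\emph{The inclusion $M\subseteq\supccn$.} Since $\supcn_{i+k}\subseteq P_{i+k}(K)$, conditional decomposability gives $M\subseteq P_{1+k}(K)\parallel P_{2+k}(K)=K$. That $M$ is conditionally controllable is exactly the argument from the proof of Theorem~\ref{thmNEW} applied to $\supcn_{1+k},\supcn_{2+k}$ in place of $\supC_{1+k},\supC_{2+k}$: each $\supcn_{i+k}$ is controllable with respect to $L(G_i)\parallel\overline{\supcn_k}$, the languages are nonconflicting, $P^{1+k}_k(\supcn_{1+k})\cap P^{2+k}_k(\supcn_{2+k})$ is controllable with respect to $L(G_k)$ and $\Sigma_{k,u}$ by hypothesis, $P^{i+k}_k(\supcn_{i+k})\subseteq\supcn_k$ (cf.~\cite{SCL}), and by Lemma~\ref{lemma:Wonham} $P_k(M)=P^{1+k}_k(\supcn_{1+k})\cap P^{2+k}_k(\supcn_{2+k})$ and $P_{i+k}(M)=\supcn_{i+k}\parallel P_k(M)$. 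It remains to check $M$ conditionally normal. Item~(1) of Definition~\ref{def:conditionalnormality} for $M$ is precisely the normality hypothesis on $P_k(M)$. For item~(2) I would verify directly that $P_{1+k}(M)=\supcn_{1+k}\parallel P_k(M)$ is normal with respect to $L(G_1)\parallel\overline{P_k(M)}$ and $Q_{1+k}$: nonconflictingness of $\supcn_{1+k}$ and $\supcn_{2+k}$ yields $\overline{P_{1+k}(M)}=\overline{\supcn_{1+k}}\cap(P^{1+k}_k)^{-1}(\overline{P_k(M)})$, and any $w\in Q_{1+k}^{-1}Q_{1+k}(\overline{P_{1+k}(M)})\cap(L(G_1)\parallel\overline{P_k(M)})$ satisfies $P^{1+k}_k(w)\in\overline{P_k(M)}\subseteq\overline{\supcn_k}$ and $P^{1+k}_1(w)\in L(G_1)$, hence $w\in L(G_1)\parallel\overline{\supcn_k}$; since also $w\in Q_{1+k}^{-1}Q_{1+k}(\overline{\supcn_{1+k}})$, normality of $\supcn_{1+k}$ with respect to $L(G_1)\parallel\overline{\supcn_k}$ and $Q_{1+k}$ forces $w\in\overline{\supcn_{1+k}}$, so $w\in\overline{P_{1+k}(M)}$. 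Item~(3) is symmetric. Thus $M$ is conditionally controllable and conditionally normal and $\subseteq K$, so $M\subseteq\supccn$.

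\emph{The inclusion $\supccn\subseteq M$.} As in Theorem~\ref{thmNEW} it suffices to prove $P_{i+k}(\supccn)\subseteq\supcn_{i+k}$ for $i=1,2$; take $i=1$. By conditional controllability and normality of $\supccn$, $P_k(\supccn)$ is controllable and normal with respect to $L(G_k)$ and contained in $P_k(K)$, hence $P_k(\supccn)\subseteq\supcn_k$, so $\overline{P_k(\supccn)}\subseteq\overline{\supcn_k}$ and $\overline{P_k(\supccn)}$ is normal with respect to $\overline{\supcn_k}$ and $Q_k$ (normality passes to sub-plants). Also $P_{1+k}(\supccn)$ is controllable and normal with respect to $L(G_1)\parallel\overline{P_k(\supccn)}$, $\Sigma_{1+k,u}$, $Q_{1+k}$. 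For controllability, copy Theorem~\ref{thmNEW}: $L(G_1)\parallel\overline{P_k(\supccn)}$ is controllable with respect to $L(G_1)\parallel\overline{\supcn_k}$ by Lemma~\ref{feng}, so by transitivity of controllability (Lemma~\ref{lem_transC}) $P_{1+k}(\supccn)$ is controllable with respect to $L(G_1)\parallel\overline{\supcn_k}$. For normality, first observe that $L(G_1)\parallel\overline{P_k(\supccn)}$ is itself normal with respect to $L(G_1)\parallel\overline{\supcn_k}$ and $Q_{1+k}$ --- the same sub-plant/parallel-composition computation, now using the translation $Q_kP^{1+k}_k=P_k'Q_{1+k}$ of Fig.~\ref{fig0} to push the $Q_{1+k}$-mask down to the $Q_k$-mask on $\overline{P_k(\supccn)}$ --- and then use transitivity of normality (valid since both plants are prefix-closed) to conclude $P_{1+k}(\supccn)$ normal with respect to $L(G_1)\parallel\overline{\supcn_k}$. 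Hence $P_{1+k}(\supccn)$ is a controllable and normal sublanguage of $P_{1+k}(K)$ with respect to $L(G_1)\parallel\overline{\supcn_k}$, $\Sigma_{1+k,u}$, $Q_{1+k}$, that is, $P_{1+k}(\supccn)\subseteq\supcn_{1+k}$; symmetrically $P_{2+k}(\supccn)\subseteq\supcn_{2+k}$, so $\supccn\subseteq\supcn_{1+k}\parallel\supcn_{2+k}=M$.

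The step I expect to be the main obstacle is the handling of normality under a change of plant. Controllability passes freely from a larger plant to any smaller sub-plant containing the language's closure, but normality does \emph{not} propagate upward to a larger plant; this is why in the second inclusion one cannot simply enlarge $L(G_1)\parallel\overline{P_k(\supccn)}$ to $L(G_1)\parallel\overline{\supcn_k}$ and must instead route through transitivity of normality, which rests on the auxiliary claim that $L(G_1)\parallel\overline{P_k(\supccn)}$ is normal with respect to $L(G_1)\parallel\overline{\supcn_k}$ (equivalently, that $\overline{P_k(\supccn)}$ is normal with respect to $\overline{\supcn_k}$). Dually, in the first inclusion the delicate point is that normality of $P_{1+k}(M)$ has to be read off from normality of $\supcn_{1+k}$ with respect to a possibly strictly larger plant, which only works because nonconflictingness makes $\overline{P_{1+k}(M)}$ the clean product $\overline{\supcn_{1+k}}\cap(P^{1+k}_k)^{-1}(\overline{P_k(M)})$ and because the observation masks are compatible along $\Sigma_k$ (the commuting diagram of Fig.~\ref{fig0}). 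If the paper already records parallel-composition and transitivity lemmas for normality (counterparts of Lemmas~\ref{feng} and~\ref{lem_transC}), both inclusions collapse to short invocations; otherwise the inline checks above are routine inverse-projection/intersection bookkeeping.
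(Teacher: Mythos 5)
Your proposal is correct and follows essentially the same route as the paper's proof: the same two inclusions, with $M\subseteq\supccn$ obtained by reusing the conditional-controllability argument of Theorem~\ref{thmNEW} plus a direct normality check for $P_{i+k}(M)$ via nonconflictingness and normality of $\supcn_{i+k}$ with respect to the larger plant $L(G_1)\parallel\overline{\supcn_k}$, and $\supccn\subseteq M$ via Lemma~\ref{lem11} together with Lemma~\ref{feng}/Lemma~\ref{lem_transC} for controllability and the composition and transitivity lemmas for normality (which the paper indeed records as Lemmas~\ref{normalitaComposition} and~\ref{lem_trans}, so your inline verifications just reprove them).
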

  \begin{proof}
    Let $M = \supcn_{1+k} \parallel \supcn_{2+k}$ and $\supccn = \supccn(K, L, (E_{1+k,u}, E_{2+k,u}, E_{k,u}), (Q_{1+k},Q_{2+k},Q_{k}))$. 
    
    To prove $M\subseteq \supccn$, we show that $M \subseteq P_{1+k}(K)\parallel P_{2+k}(K) = K$ (by conditional decomposability) is conditionally controllable with respect to $L$ and $\Sigma_{1,u}, \Sigma_{2,u}, \Sigma_{k,u}$ (which follows from Theorem~\ref{thmNEW}), and conditionally normal with respect to $L$ and $Q_{1+k},Q_{2+k},Q_{k}$ (which needs to be shown). However, $P_k(M) = P^{1+k}_k(\supcn_{1+k}) \cap P^{2+k}_k(\supcn_{2+k})$ is normal with respect to $L(G_k)$ and $Q_k$ by the assumption. Furthermore, $P_{1+k}(M) = \supcn_{1+k} \parallel P^{2+k}_k(\supcn_{2+k})$. Since $P_{1+k}(M)\subseteq \supcn_{1+k}$ and $P_{k}(M)\subseteq \supcn_{k}$ (by the assumption), $x\in Q_{1+k}^{-1}Q_{1+k}(\overline{P_{1+k}(M)}) \cap L(G_1) \parallel \overline{P_k(M)} \subseteq Q_{1+k}^{-1}Q_{1+k}(\overline{\supcn_{1+k}}) \cap L(G_1) \parallel \overline{\supcn_k} = \overline{\supcn_{1+k}}$ (by normality of $\supcn_{1+k}$). In addition, $P^{1+k}_k(x)\in \overline{P_k(M)} \subseteq P^{2+k}_k(\overline{\supcn_{2+k}})$. Thus, $x\in \overline{\supcn_{1+k}} \parallel P^{2+k}_k(\overline{\supcn_{2+k}}) = \overline{P_{1+k}(M)}$ by the nonconflictingness of the supervisors. The case for $P_{2+k}(M)$ is analogous, hence $M\subseteq \supccn$.

    To prove $\supccn \subseteq M$, it is sufficient by Lemma~\ref{lem11} to show that $P_{i+k}(\supccn)\subseteq \supcn_{i+k}$, for $i=1,2$. To do this, note that $P_{1+k}(\supccn) \subseteq P_{1+k}(K)$ is controllable and normal with respect to $L(G_1) \parallel \overline{P_k(\supccn)}$, $\Sigma_{1+k,u}$, and $Q_{1+k}$ by definition. Since $P_k(\supccn)$ is controllable and normal with respect to $L(G_k)$, $E_{k,u}$, and $Q_k$, it is also controllable and normal with respect to $\overline{\supcn_k}\subseteq L(G_k)$ because $P_k(\supccn)\subseteq \supcn_k$. As $P_{1+k}(\supccn)$ is controllable with respect to $L(G_1)\parallel \overline{P_k(\supccn)}$, and $L(G_1)\parallel \overline{P_k(\supccn)}$ is controllable with respect to $L(G_1)\parallel \overline{\supcn_k}$ by Lemma~\ref{feng}, the transitivity of controllability (Lemma~\ref{lem_transC}) implies that $P_{1+k}(\supccn)$ is controllable with respect to $L(G_1) \parallel \overline{\supcn_k}$ and $\Sigma_{1+k,u}$. Similarly, as $P_{1+k}(\supccn)$ is normal with respect to $L(G_1)\parallel \overline{P_k(\supccn)}$, and $L(G_1)\parallel \overline{P_k(\supccn)}$ is normal with respect to $L(G_1)\parallel \overline{\supcn_k}$ by Lemma~\ref{normalitaComposition}, transitivity of normality (Lemma~\ref{lem_trans}) implies that $P_{1+k}(\supccn)$ is normal with respect to $L(G_1) \parallel \overline{\supcn_k}$ and $Q_{1+k}$. Thus, we have shown that $P_{1+k}(\supccn)\subseteq \supcn_{1+k}$. The case of $P_{2+k}(M)$ is analogous, hence $\supccn \subseteq M$ and the proof is complete.
  \end{proof}
  
  Note that the sufficient condition in Theorem~\ref{thm2b} is not practical for verification, although the intersection is only over the coordinator alphabet that is hopefully small. Unlike controllability, normality is not preserved by natural projections under observer and OCC assumptions. This would require results on hierarchical control under partial observations that are not known so far. Therefore, we propose a condition that is (similarly as in the case of complete observations) stronger than the one of Theorem~\ref{thm2b}, but is easy to check and, moreover, is sufficient for a distributed computation of the supremal conditionally controllable and conditionally normal sublanguage even in the case of non-prefix-closed specifications. Namely, we observe that controllability and normality conditions of Theorem~\ref{thm2b} are weaker than to require that $\supCN_k \subseteq  P_k(\supCN_{i+k})$, for $i=1,2$. The intuition behind the condition $\supCN_k \subseteq  P_k(\supCN_{i+k})$, for $i=1,2$, is that local supervisors (given by $\supCN_{i+k}$) do not need to improve the action by the supervisor for the coordinator on the coordinator alphabet. In this case, the intuition is the same as if the three supervisors (the supervisor for the coordinator and local supervisors) would operate on disjoint alphabets (namely $\Sigma_k$, $\Sigma_1\setminus \Sigma_k$ and $\Sigma_2\setminus \Sigma_k$) and it is well known that there is no problem with blocking and maximal permissiveness in this case (nonconflictness and mutual controllability of modular control) are trivially satisfied.
  
  \begin{proposition}\label{weakern}
    Consider the setting of Problem~\ref{problem:controlsynthesis} and the languages defined in~(\ref{eqCN}). If $\supCN_k \subseteq P_k(\supCN_{i+k})$, for $i=1,2$, then $P_k(\supCN_{1+k}) \cap P_k(\supCN_{2+k})$ is controllable and normal with respect to $L(G_k)$, $\Sigma_{k,u}$, and $Q_k$. 
  \end{proposition}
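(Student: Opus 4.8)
The plan is to proceed exactly as in the proof of Proposition~\ref{weaker}, the only new ingredient being that the ``always true'' inclusion one leans on is now an inclusion of \emph{prefix closures} rather than of the languages themselves. First I would record the unconditional fact that $\overline{P^{i+k}_k(\supCN_{i+k})}\subseteq\overline{\supCN_k}$ for $i=1,2$. This is immediate from the definition in~(\ref{eqCN}): $\supCN_{i+k}$ is a controllable and normal sublanguage of $P_{i+k}(K)$ with respect to the plant $L(G_i)\parallel\overline{\supCN_k}$, hence $\overline{\supCN_{i+k}}\subseteq L(G_i)\parallel\overline{\supCN_k}\subseteq(P^{i+k}_k)^{-1}(\overline{\supCN_k})$, and applying $P^{i+k}_k$ gives $\overline{P^{i+k}_k(\supCN_{i+k})}=P^{i+k}_k(\overline{\supCN_{i+k}})\subseteq\overline{\supCN_k}$.

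Next I would combine this with the hypothesis $\supCN_k\subseteq P^{i+k}_k(\supCN_{i+k})$ to obtain the equality of prefix closures $\overline{P^{i+k}_k(\supCN_{i+k})}=\overline{\supCN_k}$, for $i=1,2$. Since both controllability with respect to $L(G_k)$ and $\Sigma_{k,u}$ and normality with respect to $L(G_k)$ and $Q_k$ are conditions on the prefix closure alone, and $\supCN_k$ has both properties by its definition in~(\ref{eqCN}), each $P^{i+k}_k(\supCN_{i+k})$ is likewise controllable and normal with respect to $L(G_k)$, $\Sigma_{k,u}$, $Q_k$. As moreover $P^{i+k}_k(\supCN_{i+k})\subseteq P^{i+k}_k(P_{i+k}(K))=P_k(K)$, the supremality of $\supCN_k$ among the controllable and normal sublanguages of $P_k(K)$ yields $P^{i+k}_k(\supCN_{i+k})\subseteq\supCN_k$; together with the hypothesis, $P^{i+k}_k(\supCN_{i+k})=\supCN_k$ for $i=1,2$. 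Hence $P_k(\supCN_{1+k})\cap P_k(\supCN_{2+k})=\supCN_k$, which is controllable and normal with respect to $L(G_k)$, $\Sigma_{k,u}$, and $Q_k$ by definition, finishing the proof.

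The subtle point — and the reason this argument is a notch more delicate than that of Proposition~\ref{weaker} — is that one should resist imitating the controllability-only case verbatim, i.e.\ claiming $P^{i+k}_k(\supCN_{i+k})\subseteq\supCN_k$ outright: as the discussion following Theorem~\ref{thm2b} points out, normality is not in general preserved by natural projections, so that inclusion can fail without the hypothesis. The way around it, used above, is to descend to prefix closures first, where the inclusion is free since $\supCN_{i+k}$ literally sits inside the plant $L(G_i)\parallel\overline{\supCN_k}$, and to invoke the hypothesis only there; controllability and normality then come along automatically because they depend on nothing but the prefix closure. I expect this conceptual point to be the only real content, the remaining manipulations being routine.
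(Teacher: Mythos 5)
Your proof is correct, but it reaches the crucial inclusion $P^{i+k}_k(\supCN_{i+k})\subseteq \supCN_k$ by a genuinely different route than the paper. You use the hypothesis to upgrade the free inclusion of prefix closures $\overline{P^{i+k}_k(\supCN_{i+k})}\subseteq\overline{\supCN_k}$ to an equality, observe that controllability and normality with respect to $L(G_k)$, $\Sigma_{k,u}$, $Q_k$ are properties of the prefix closure alone, and then invoke the supremality of $\supCN_k$ among controllable and normal sublanguages of $P_k(K)$; all of these steps check out. The paper instead establishes the reverse inclusion $P_k(\supCN_{i+k})\subseteq\supCN_k$ \emph{unconditionally}: it notes that $P_k(\supCN_{i+k})\subseteq \overline{\supCN_k}\cap P_k(K)$ and then verifies by direct elementwise arguments that $\overline{\supCN_k}\cap P_k(K)$ is itself controllable and normal with respect to $L(G_k)$, $\Sigma_{k,u}$, and $Q_k$, so that supremality places this whole language inside $\supCN_k$. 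Your version is shorter, but the paper's version buys a stronger intermediate fact --- the inclusion $\supCN_k\supseteq P_k(\supCN_{i+k})$ with no hypothesis at all --- which is explicitly reused in the proof of the corollary that follows the proposition. This also means your closing ``subtle point'' is wrong on a point of fact: the unconditional inclusion $P^{i+k}_k(\supCN_{i+k})\subseteq\supCN_k$ does \emph{not} fail in general; what fails is only the naive justification of it via preservation of normality under projection, and the paper circumvents that by sandwiching $P_k(\supCN_{i+k})$ inside a language that is shown directly to be controllable and normal rather than by projecting normality. Since your own argument never relies on that remark, its inaccuracy does not affect the validity of your proof.
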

  \begin{proof}
    First of all, we shown that the inclusion $\supCN_k \supseteq P_k(\supCN_{i+k})$, for $i=1,2$ always holds true. From its definition, $P_k(\supCN_{i+k})\subseteq P_k(L(G_i) \| \overline{\supcn_k})\subseteq \overline{\supcn_k}$ and, clearly, $P_k(\supCN_{i+k})\subseteq P_k(K)$ as well. In order to show that $P_k(\supCN_{i+k})\subseteq \supCN_k$, it suffices to show that  $\overline{\supCN_k}\cap P_k(K)\subseteq \supCN_k$. This can be proven by showing that $\overline{\supCN_k}\cap P_k(K)$ is  controllable and normal with respect to $L(G_k)$, $\Sigma_{k,u}$, and $Q_k$.
    
    For controllability, let $s\in \overline{\overline{\supCN_k}\cap P_k(K)}$, $u\in \Sigma_{k,u}$ with $su\in L(G_k)$. Since there exists $t\in \Sigma_{k}^*$ such that $st\in \overline{\supCN_k}\cap P_k(K)\subseteq \overline{\supCN_k}$, we have that $s\in \overline{\supCN_k}$ as well. Since $\supCN_k$ is controllable with respect to $L(G_k)$ and $\Sigma_{k,u}$, $su\in \overline{\supCN_k}\subseteq \overline{P_k(K)}$. Hence, there exists $v\in \Sigma_{k}^*$ such that $suv\in \supCN_k\subseteq P_k(K)$. Altogether, $suv\in \overline{\supCN_k}\cap P_k(K)$, i.e., $su\in \overline{\overline{\supCN_k}\cap P_k(K)}$. 

    For normality, let $s\in \overline{\overline{\supCN_k}\cap P_k(K)}$ and $s'\in L(G_k)$ with $Q_k(s)=Q_k(s')$.  Recall that $s\in \overline{\supCN_k}$ as well. Again, normality of $\supCN_k$ with respect to $L(G_k)$ and $Q_k$ implies that $s'\in \overline{\supCN_k}$. Thus, there exists $v\in \Sigma_{k}^*$ such that $s'v\in \supCN_k\subseteq P_k(K)$. This implies that $s'v\in \overline{\supCN_k}\cap P_k(K)$, i.e., $s'\in \overline{\overline{\supCN_k}\cap P_k(K)}$, which completes the proof of the inclusion $\supCN_k \supseteq P_k(\supCN_{i+k})$, for $i=1,2$. 

    According to the assumption that the other inclusions also hold, we have the equalities $\supCN_k=P_k(\supCN_{i+k})$, for $i=1,2$. Therefore, $P_k(\supCN_{1+k}) \cap P_k(\supCN_{2+k})=\supCN_k$, which is controllable and normal with respect to $L(G_k)$, $\Sigma_{k,u}$, and $Q_k$ by definition of $\supCN_k$.
  \end{proof} 
   
  Now, combining Proposition~\ref{weakern} and Theorem~\ref{thm2b} we obtain the corollary below.
  \begin{cor}
    Consider the setting of Problem~\ref{problem:controlsynthesis} and the languages defined in~(\ref{eqCN}). If $\supCN_k \subseteq P_k(\supCN_{i+k})$, for $i=1,2$, then 
    \begin{multline*}
      \supcn_{1+k} \parallel \supcn_{2+k} \\= \supccn(K, L, (\Sigma_{1,u}, \Sigma_{2,u}, \Sigma_{k,u}), (Q_{1+k},Q_{2+k},Q_{k}))\,, 
    \end{multline*}
    where $L=L(G_1\|G_2\|G_k)$.
  \end{cor}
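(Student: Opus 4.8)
The plan is to prove the two inclusions separately, writing $M=\supcn_{1+k}\parallel\supcn_{2+k}$ and $\supccn=\supccn(K,L,(\Sigma_{1,u},\Sigma_{2,u},\Sigma_{k,u}),(Q_{1+k},Q_{2+k},Q_{k}))$. Exactly as in the proof of Proposition~\ref{weakern}, the converse inclusion $P_k(\supcn_{i+k})\subseteq\supcn_k$ is always true, so the hypothesis upgrades to the equality $P_k(\supcn_{i+k})=\supcn_k$, whence $\supcn_{i+k}\subseteq(P^{i+k}_k)^{-1}(\supcn_k)$, for $i=1,2$; this is the one structural fact everything rests on.

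For $M\subseteq\supccn$ I would imitate the forward part of the proof of Theorem~\ref{thm2b}, except that the fact above collapses the parallel compositions completely. By Lemma~\ref{lemma:Wonham} (valid since $\Sigma_{1+k}\cap\Sigma_{2+k}=\Sigma_k$), $P_k(M)=P^{1+k}_k(\supcn_{1+k})\cap P^{2+k}_k(\supcn_{2+k})=\supcn_k$; and using $P_{1+k}(L_1\parallel L_2)=L_1\parallel P^{2+k}_k(L_2)$ together with $\supcn_{1+k}\subseteq(P^{1+k}_k)^{-1}(\supcn_k)$ one gets $P_{1+k}(M)=\supcn_{1+k}\parallel\supcn_k=\supcn_{1+k}$, and symmetrically $P_{2+k}(M)=\supcn_{2+k}$. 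Since $\overline{P_k(M)}=\overline{\supcn_k}$, each of the three projections $P_k(M),P_{1+k}(M),P_{2+k}(M)$ is, by the defining property of $\supcn_k,\supcn_{1+k},\supcn_{2+k}$ in~(\ref{eqCN}), controllable and normal with respect to precisely the plant (and uncontrollable-event set and projection) appearing in Definitions~\ref{def:conditionalcontrollability} and~\ref{def:conditionalnormality} applied to $M$; hence $M$ is conditionally controllable and conditionally normal, and $M\subseteq P_{1+k}(K)\parallel P_{2+k}(K)=K$ by conditional decomposability. As conditional controllability (cf.~\cite{JDEDS}) and conditional normality (Theorem~\ref{existence2}) are each preserved under arbitrary unions, $M\subseteq\supccn$. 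Observe that no nonconflictingness of $\supcn_{1+k},\supcn_{2+k}$ is invoked -- this is the concrete meaning of the paper's remark that in this regime ``nonconflictness is trivially satisfied''; alternatively one may first deduce nonconflictingness from $P_k(\supcn_{i+k})=\supcn_k$ and then quote Theorem~\ref{thm2b} and Proposition~\ref{weakern} verbatim.

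For $\supccn\subseteq M$ the reasoning is the reverse half of the proof of Theorem~\ref{thm2b}, unchanged: by Lemma~\ref{lem11} it suffices to show $P_{i+k}(\supccn)\subseteq\supcn_{i+k}$. Because $\supccn\subseteq K$ is conditionally controllable and conditionally normal, $P_k(\supccn)$ is controllable and normal with respect to $L(G_k)$, so $P_k(\supccn)\subseteq\supcn_k$; then $L(G_1)\parallel\overline{P_k(\supccn)}$ is controllable with respect to $L(G_1)\parallel\overline{\supcn_k}$ by Lemma~\ref{feng} and normal with respect to it by Lemma~\ref{normalitaComposition}, so that, $P_{1+k}(\supccn)$ being controllable and normal with respect to $L(G_1)\parallel\overline{P_k(\supccn)}$, transitivity of controllability (Lemma~\ref{lem_transC}) and of normality (Lemma~\ref{lem_trans}) yield $P_{1+k}(\supccn)$ controllable and normal with respect to $L(G_1)\parallel\overline{\supcn_k}$, i.e.\ $P_{1+k}(\supccn)\subseteq\supcn_{1+k}$; the case $i=2$ is analogous. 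The only genuinely delicate step is the collapse $P_{i+k}(M)=\supcn_{i+k}$ (equivalently, the verification that the nonconflictingness hypothesis of Theorem~\ref{thm2b} is automatic here); once that is granted, everything else is routine bookkeeping or a transcription of the proof of Theorem~\ref{thm2b}.
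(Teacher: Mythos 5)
Your proposal is correct and follows essentially the same route as the paper's proof: upgrading the hypothesis to the equality $P_k(\supcn_{i+k})=\supcn_k$ via Proposition~\ref{weakern}, collapsing $P_k(M)=\supcn_k$ and $P_{i+k}(M)=\supcn_{i+k}$ with Lemma~\ref{lemma:Wonham} so that conditional controllability and normality of $M$ follow directly from~(\ref{eqCN}), and reusing the reverse half of Theorem~\ref{thm2b} (where nonconflictingness is not needed) for $\supccn\subseteq M$. Your observation that nonconflictingness is automatic here is a correct reading of the paper's remark, not a deviation.
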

  \begin{proof}
    Let $\supccn = \supccn(K, L, (\Sigma_{1,u}, \Sigma_{2,u}, \Sigma_{k,u})$, $(Q_{1+k},Q_{2+k},Q_{k}))$ and $M = \supcn_{1+k} \parallel \supcn_{2+k}$.
    To prove that $M$ is a subset of $\supccn$, we show that (i) $M$ is a subset of $K$, (ii) $M$ is conditionally controllable with respect to generators $G_1$, $G_2$, $G_k$ and uncontrollable event sets $\Sigma_{1,u}$, $\Sigma_{2,u}$, $\Sigma_{k,u}$, and (iii) $M$ is conditionally normal with respect to generators $G_1$, $G_2$, $G_k$ and projections $Q_{1+k}$, $Q_{2+k}$, $Q_{k}$. To this aim, notice that $M$ is a subset of $P_{1+k}(K) \parallel P_{2+k}(K) = K$, because $K$ is conditionally decomposable. Moreover, by Lemma~\ref{lemma:Wonham} and the fact shown in the proof of Proposition~\ref{weakern} that $\supcn_k \supseteq P_k(\supcn_{i+k})$, for $i=1,2$, the language $P_k(M) = P_k(\supcn_{1+k}) \cap P_k(\supcn_{2+k})=\supcn_k$ is controllable and normal with respect to $L(G_k)$, $\Sigma_{k,u}$, and $Q_k$. Similarly, $P_{i+k}(M) = \supcn_{i+k} \parallel P_k(\supcn_{j+k}) = \supcn_{i+k} \parallel \supcn_{k} = \supcn_{i+k}$, for $j\neq i$, which is controllable and normal with respect to $L(G_i)\parallel \overline{P_k(M)}$. Hence, $M$ is a subset of $\supccn$.

    The opposite inclusion is shown in Theorem~\ref{thm2b}, because nonconflictingness is not needed in this direction of the proof.
  \end{proof}
%   \begin{proof}
%     Using Proposition~\ref{weakern} we have that $P_k(\supCN_{1+k}) \cap P_k(\supCN_{2+k})$ is controllable and normal with respect to $L(G_k)$, $\Sigma_{k,u}$, and $Q_k$. It remains to show that $\supcn_{1+k}$ and $\supcn_{2+k}$ are nonconflicting. Then the claim follows directly from Theorem~\ref{thm2b}. We recall that $\Sigma_k = \Sigma_{1+k}\cap \Sigma_{2+k}$, hence the shared event set of $\supcn_{1+k}$ and $\supcn_{2+k}$ is $\Sigma_k$. Consequently, $P_{2+k}(\supCN_{1+k})=P_k(\supCN_{1+k})$ and $P_{1+k}(\supCN_{2+k})=P_k(\supCN_{2+k})$, hence $P_{2+k}(\supCN_{1+k})=\supCN_k =P_{1+k}(\supCN_{2+k})$. By Lemma~\ref{soucin1}, $\supcn_{1+k}$ and $\supcn_{2+k}$ are nonconflicting.
%   \end{proof} 
%    Finally, we will revisit the problem when   $\supccn$ coincides with the monolithic optimal solution $\supcn$.

%%%%%%%%%%%%%%%%%%%%%%%%%%%%%%%%%%%%%%%%%%%%%%%%%%%%%%%%%%%%%%%%%%%%%%%%%%%%%%%%
\section{Conclusion}\label{conclusion}
  In this paper, we have further generalized several results of coordination control of concurrent automata with both complete and partial observations. We have presented weaker sufficient conditions for the computation of supremal conditionally controllable sublanguages and supremal conditionally controllable and conditionally normal sublanguages with simplified concepts of conditional observability and conditional normality. Since our results admit quite a straightforward extension to a multi-level coordination control framework, in a future work we would apply our framework to DES models of engineering systems.

%%%%%%%%%%%%%%%%%%%%%%%%%%%%%%%%%%%%%%%%%%%%%%%%%%%%%%%%%%%%%%%%%%%%%%%%%%%%%%%%
\section{Acknowledgments}
%   The authors gratefully acknowledge comments and suggestions of the anonymous referees.
  This research was supported by the M\v{S}MT grant LH13012 (MUSIC) and by RVO: 67985840.
  
\bibliographystyle{IEEEtranS}
\bibliography{cdc2014}

\newpage
\appendix
  In this section, we list the auxiliary results.
  \begin{lem}[Proposition~4.6 in \cite{FLT}]\label{feng}
    Let $L_i\subseteq \Sigma_i^*$, for $i=1,2$, be prefix-closed languages, and let $K_i\subseteq L_i$ be controllable with respect to $L_i$ and $\Sigma_{i,u}$. Let $\Sigma=\Sigma_1\cup \Sigma_2$. If $K_1$ and $K_2$ are synchronously nonconflicting, then $K_1\parallel K_2$ is controllable with respect to $L_1\parallel L_2$ and $\Sigma_u$. \hfill\QED
  \end{lem}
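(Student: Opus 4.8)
The plan is to verify the controllability inequality $\overline{K_1\parallel K_2}\,\Sigma_u\cap(L_1\parallel L_2)\subseteq\overline{K_1\parallel K_2}$ directly, reducing the claim for the product to the assumed controllability of the two factors by passing to the component alphabets through the projections $P_i:\Sigma^*\to\Sigma_i^*$, $i=1,2$, where $\Sigma=\Sigma_1\cup\Sigma_2$.

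First I would rewrite the relevant prefix closures as intersections of inverse projections. From the definition of $\parallel$ one has $s\sigma\in L_1\parallel L_2$ iff $P_i(s\sigma)\in L_i$ for $i=1,2$ (and, since the $L_i$ are prefix-closed, so is $L_1\parallel L_2$); and because $K_1$ and $K_2$ are synchronously nonconflicting, $\overline{K_1\parallel K_2}=\overline{K_1}\parallel\overline{K_2}=P_1^{-1}(\overline{K_1})\cap P_2^{-1}(\overline{K_2})$, so every $s\in\overline{K_1\parallel K_2}$ satisfies $P_i(s)\in\overline{K_i}$ for $i=1,2$. Now fix $s\in\overline{K_1\parallel K_2}$ and $\sigma\in\Sigma_u$ with $s\sigma\in L_1\parallel L_2$, and show $P_i(s\sigma)\in\overline{K_i}$ for both $i$. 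If $\sigma\notin\Sigma_i$, then $P_i(s\sigma)=P_i(s)\in\overline{K_i}$ and there is nothing to do. If $\sigma\in\Sigma_i$, then $\sigma\in\Sigma_i\cap\Sigma_u=\Sigma_{i,u}$, and from $P_i(s)\in\overline{K_i}$ together with $P_i(s)\sigma=P_i(s\sigma)\in L_i$ the controllability of $K_i$ with respect to $L_i$ and $\Sigma_{i,u}$ gives $P_i(s\sigma)=P_i(s)\sigma\in\overline{K_i}$. Since $\Sigma=\Sigma_1\cup\Sigma_2$, at least one of $\sigma\in\Sigma_1$, $\sigma\in\Sigma_2$ holds, but in every case we conclude $P_i(s\sigma)\in\overline{K_i}$ for $i=1,2$, that is, $s\sigma\in P_1^{-1}(\overline{K_1})\cap P_2^{-1}(\overline{K_2})=\overline{K_1\parallel K_2}$, as required.

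The step I expect to be the crux is the identity $\overline{K_1\parallel K_2}=\overline{K_1}\parallel\overline{K_2}$, which is precisely where the synchronous nonconflictingness hypothesis is used: the inclusion $\overline{K_1\parallel K_2}\subseteq\overline{K_1}\parallel\overline{K_2}$ holds in general, but without the reverse inclusion the argument above would only place $s\sigma$ in $\overline{K_1}\parallel\overline{K_2}$, and one could not pull this back to $\overline{K_1\parallel K_2}$. Everything else is the routine case split on whether a given component alphabet contains $\sigma$, together with the elementary bookkeeping $\Sigma_{i,u}=\Sigma_i\cap\Sigma_u$ and $\Sigma_u=\Sigma\setminus\Sigma_c$ over $\Sigma=\Sigma_1\cup\Sigma_2$.
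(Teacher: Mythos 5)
Your proof is correct. Note that the paper does not prove this lemma at all---it imports it as Proposition~4.6 of the cited reference [FLT]---so there is no in-paper argument to compare against; your direct verification (reduce $s\sigma\in\overline{K_1\parallel K_2}\,$? to the two component memberships $P_i(s\sigma)\in\overline{K_i}$ via the case split on $\sigma\in\Sigma_i$, and use synchronous nonconflictingness exactly once, to pull $\overline{K_1}\parallel\overline{K_2}$ back to $\overline{K_1\parallel K_2}$) is the standard proof of this fact and is complete.
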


  \begin{lem}[\cite{automatica2011}]\label{lem_transC}
    Let $K\subseteq L \subseteq M$ be languages over $\Sigma$ such that $K$ is controllable with respect to $\overline{L}$ and $\Sigma_u$, and $L$ is controllable with respect to $\overline{M}$ and $\Sigma_u$. Then $K$ is controllable with respect to $\overline{M}$ and $\Sigma_u$. \hfill\QED
  \end{lem}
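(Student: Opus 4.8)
The plan is to reduce the statement to the set-theoretic characterization of controllability and then chase inclusions through the intermediate language $L$. Recall that a language $A\subseteq\Sigma^*$ is controllable with respect to a prefix-closed language $B$ and $\Sigma_u$ exactly when $\overline{A}\,\Sigma_u\cap B\subseteq\overline{A}$. So the goal is to verify $\overline{K}\,\Sigma_u\cap\overline{M}\subseteq\overline{K}$; to that end I would fix an arbitrary word $s\in\overline{K}$ together with an uncontrollable event $u\in\Sigma_u$ such that $su\in\overline{M}$, and argue that $su\in\overline{K}$.

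The argument is a two-step lift. First, monotonicity of the prefix-closure operator applied to $K\subseteq L$ gives $\overline{K}\subseteq\overline{L}$, hence $s\in\overline{L}$; since $\overline{M}$ is prefix-closed and $L$ is controllable with respect to $\overline{M}$ and $\Sigma_u$, i.e.\ $\overline{L}\,\Sigma_u\cap\overline{M}\subseteq\overline{L}$, we obtain $su\in\overline{L}$. Second, now that $s\in\overline{K}$, $u\in\Sigma_u$, and $su\in\overline{L}$ with $\overline{L}$ prefix-closed, controllability of $K$ with respect to $\overline{L}$ and $\Sigma_u$, i.e.\ $\overline{K}\,\Sigma_u\cap\overline{L}\subseteq\overline{K}$, yields $su\in\overline{K}$. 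As $s$ and $u$ were arbitrary, this proves $\overline{K}\,\Sigma_u\cap\overline{M}\subseteq\overline{K}$, which is the claim.

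I do not expect a genuine obstacle here: the statement is essentially transitivity of an inclusion between prefix-closed languages under the uncontrollable-extension operator, and the proof is a couple of lines. The only points requiring (minimal) care are invoking monotonicity of the prefix closure to move from $\overline{K}$ into $\overline{L}$, and observing that $\overline{L}$ and $\overline{M}$, being prefix closures, are themselves prefix-closed, so that the two controllability hypotheses can be applied verbatim.
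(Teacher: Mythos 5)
Your proof is correct: the two-step lift (first $s\in\overline{K}\subseteq\overline{L}$ and controllability of $L$ w.r.t.\ $\overline{M}$ give $su\in\overline{L}$, then controllability of $K$ w.r.t.\ $\overline{L}$ gives $su\in\overline{K}$) is exactly the standard transitivity argument, and the paper itself only cites this lemma from \cite{automatica2011} without reproducing a proof. Nothing is missing; the monotonicity of prefix closure and the prefix-closedness of $\overline{L}$ and $\overline{M}$ are indeed the only points needing care, and you handle both.
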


  \begin{lem}[\cite{Won04}]\label{lemma:Wonham}
    Let $P_k : \Sigma^*\to \Sigma_k^*$ be a projection, and let $L_i \subseteq \Sigma_i^*$, where $\Sigma_i\subseteq \Sigma$, for $i=1,2$, and $\Sigma_1\cap \Sigma_2 \subseteq \Sigma_k$. Then $P_k(L_1\parallel L_2)=P_k(L_1) \parallel P_k(L_2)$. \hfill\QED
  \end{lem}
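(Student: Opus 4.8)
This is the standard lemma that natural projection commutes with synchronous product when the shared alphabet is retained. The plan is to prove the two inclusions separately; the easy one holds for arbitrary alphabets, and the hypothesis $\Sigma_1\cap\Sigma_2\subseteq\Sigma_k$ is needed only for the reverse one. Throughout I would write $\Sigma=\Sigma_1\cup\Sigma_2$, let $P_i:\Sigma^*\to\Sigma_i^*$ be the projections appearing in the definition $L_1\parallel L_2=P_1^{-1}(L_1)\cap P_2^{-1}(L_2)$, and note that $P_k(L_1)\subseteq(\Sigma_1\cap\Sigma_k)^*$ and $P_k(L_2)\subseteq(\Sigma_2\cap\Sigma_k)^*$, so that both sides of the identity are languages over $\Sigma\cap\Sigma_k=(\Sigma_1\cap\Sigma_k)\cup(\Sigma_2\cap\Sigma_k)$; I let $R_i:(\Sigma\cap\Sigma_k)^*\to(\Sigma_i\cap\Sigma_k)^*$ denote the corresponding projections, so $P_k(L_1)\parallel P_k(L_2)=\{w : R_1(w)\in P_k(L_1) \text{ and } R_2(w)\in P_k(L_2)\}$.

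For $P_k(L_1\parallel L_2)\subseteq P_k(L_1)\parallel P_k(L_2)$ I would take $w=P_k(s)$ with $s\in L_1\parallel L_2$, so $P_i(s)\in L_i$; using the elementary identity $R_i\circ P_k=P_k\circ P_i$ on $\Sigma^*$, one gets $R_i(w)=P_k(P_i(s))\in P_k(L_i)$ for $i=1,2$, hence $w\in P_k(L_1)\parallel P_k(L_2)$. This direction uses nothing about $\Sigma_1\cap\Sigma_2$.

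For the reverse inclusion, given $w\in P_k(L_1)\parallel P_k(L_2)$, I would pick $s_i\in L_i$ with $P_k(s_i)=R_i(w)$. The hypothesis enters through the observation that $s_1$ and $s_2$ agree on the common alphabet: since $\Sigma_1\cap\Sigma_2\subseteq\Sigma_k$, projection onto $\Sigma_1\cap\Sigma_2$ factors through $P_k$, so the $(\Sigma_1\cap\Sigma_2)$-projections of $s_1$, of $s_2$, and of $w$ all coincide. I would then assemble a word $s\in\Sigma^*$ by scanning $w$ and, just before placing each successive $\Sigma_k$-letter of $w$, inserting the corresponding maximal block of $s_1$ over $\Sigma_1\setminus\Sigma_k$ and the corresponding maximal block of $s_2$ over $\Sigma_2\setminus\Sigma_k$ (in either order, which is legitimate because $(\Sigma_1\setminus\Sigma_k)\cap(\Sigma_2\setminus\Sigma_k)=\emptyset$, again by the hypothesis), with the trailing blocks appended at the end. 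Then $P_1(s)=s_1\in L_1$ and $P_2(s)=s_2\in L_2$, so $s\in L_1\parallel L_2$, while $P_k(s)=w$ because the inserted blocks contain no $\Sigma_k$-letter; thus $w\in P_k(L_1\parallel L_2)$.

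The main obstacle is making this scanning/insertion step precise: one must check that the $(\Sigma_1\cap\Sigma_k)$-letters of $w$ occur with the same order and multiplicity as the $\Sigma_k$-letters of $s_1$ (which holds since $P_k(s_1)=R_1(w)$), likewise for $s_2$, and that the two scans are mutually compatible on the letters they share, all of which lie in $\Sigma_1\cap\Sigma_2\subseteq\Sigma_k$; granting this bookkeeping, $P_1(s)=s_1$ and $P_2(s)=s_2$ follow. An equivalent but slicker route would be to invoke the standard identity $P_k(P_1^{-1}(L_1)\cap P_2^{-1}(L_2))=P_k(P_1^{-1}(L_1))\cap P_k(P_2^{-1}(L_2))$, valid exactly under this alphabet condition, and then simplify each factor to $P_k(L_i)$ synchronized with a free language over the private events; but the direct construction above is self-contained and exhibits precisely where $\Sigma_1\cap\Sigma_2\subseteq\Sigma_k$ is used.
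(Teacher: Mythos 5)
Your proof is correct. The paper does not actually prove this lemma---it is imported verbatim from Wonham's notes \cite{Won04} with only a citation---so there is no in-paper argument to compare against; your two-inclusion argument with the explicit interleaving construction is the standard proof, the commutation $R_i\circ P_k = P_k\circ P_i$ handles the easy direction, and the hypothesis $\Sigma_1\cap\Sigma_2\subseteq\Sigma_k$ enters exactly where you say it does (disjointness of the private blocks $\Sigma_1\setminus\Sigma_k$ and $\Sigma_2\setminus\Sigma_k$, and consistency of $s_1$, $s_2$, $w$ on the shared events), with the bookkeeping you flag being routine and verifiable.
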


  \begin{lem}[\cite{automatica2011}]\label{lem11}
    Let $L_i \subseteq \Sigma_i^*$, for $i=1,2$, and let $P_i : (\Sigma_1\cup \Sigma_2)^* \to \Sigma_i^*$ be a projection. Let $A\subseteq (\Sigma_1\cup \Sigma_2)^*$ such that $P_1(A)\subseteq L_1$ and $P_2(A)\subseteq L_2$. Then $A \subseteq L_1\parallel L_2$. \hfill\QED
  \end{lem}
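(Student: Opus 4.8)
The plan is to unfold the definition of the synchronous product and perform a routine element-chase. Recall from the preliminaries that the synchronous product is defined precisely as $L_1 \parallel L_2 = P_1^{-1}(L_1) \cap P_2^{-1}(L_2)$, where $P_i : (\Sigma_1 \cup \Sigma_2)^* \to \Sigma_i^*$ are the projections to the local alphabets. Thus proving the inclusion $A \subseteq L_1 \parallel L_2$ amounts to showing that every word of $A$ lies in both $P_1^{-1}(L_1)$ and $P_2^{-1}(L_2)$.

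First I would fix an arbitrary word $w \in A$ and aim to show $w \in L_1 \parallel L_2$. Since $w \in A$, its projection satisfies $P_1(w) \in P_1(A)$, and by the hypothesis $P_1(A) \subseteq L_1$ we obtain $P_1(w) \in L_1$. By the definition of the inverse projection this is exactly the statement $w \in P_1^{-1}(L_1)$. Running the identical argument for the second component, the hypothesis $P_2(A) \subseteq L_2$ gives $P_2(w) \in L_2$, hence $w \in P_2^{-1}(L_2)$.

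Combining the two memberships yields $w \in P_1^{-1}(L_1) \cap P_2^{-1}(L_2) = L_1 \parallel L_2$. As $w \in A$ was arbitrary, this establishes $A \subseteq L_1 \parallel L_2$, completing the argument. The only subtlety worth noting is that the two projections $P_1, P_2$ share the common domain $(\Sigma_1 \cup \Sigma_2)^*$ and coincide with the projections appearing in the definition of $\parallel$, so no change-of-alphabet issue arises; the inverse images are taken over the same ambient monoid in which $A$ lives.

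There is essentially no genuine obstacle here: the result is a direct consequence of the definition of $\parallel$ via inverse projections, and the proof is a one-line containment chase once that definition is invoked. The step most likely to require care in writing is simply making explicit that $P_i(A) \subseteq L_i$ translates, word by word, into $A \subseteq P_i^{-1}(L_i)$ — which is immediate from the fact that $w \in P_i^{-1}(L_i)$ holds iff $P_i(w) \in L_i$.
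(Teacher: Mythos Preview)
Your proof is correct. The paper does not actually supply a proof of this lemma; it is cited from~\cite{automatica2011} and stated with a closing \QED\ symbol only. Your argument --- unfolding $L_1 \parallel L_2 = P_1^{-1}(L_1)\cap P_2^{-1}(L_2)$ and observing that $P_i(A)\subseteq L_i$ is equivalent to $A\subseteq P_i^{-1}(L_i)$ --- is exactly the standard one-line verification one would expect, and there is nothing to add.
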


  \begin{lem}[\cite{pcl06}]\label{obsComposition}
    Let $L_i \subseteq \Sigma_i^*$, for $i\in J$, be languages, and let $\cup_{k,\ell\in J}^{k\neq\ell} (\Sigma_k\cap \Sigma_\ell)\subseteq \Sigma_0 \subseteq (\cup_{i\in J} \Sigma_i)^*$. If $P_{i,0}:\Sigma_i^* \to (\Sigma_i\cap \Sigma_0)^*$ is an $L_i$-observer, for $i\in J$, then $P_{0}:(\cup_{i\in J} \Sigma_i)^* \to \Sigma_0^*$ is an $(\parallel_{i\in J} L_i)$-observer. \hfill\QED
  \end{lem}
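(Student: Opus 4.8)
The plan is to prove the lemma by induction on $n = |J|$, with essentially all the work concentrated in the case $n = 2$. The base case $n = 1$ is trivial: $\Sigma_0 \subseteq \Sigma_1$ forces $\Sigma_1 \cap \Sigma_0 = \Sigma_0$ and $P_{1,0} = P_0$, so the conclusion is the hypothesis. For the inductive step with $J = \{1,\dots,n\}$ and $n \ge 3$, I would set $M = \parallel_{i=1}^{n-1} L_i$ over $\Sigma_M = \bigcup_{i=1}^{n-1}\Sigma_i$. Since $\Sigma_i \subseteq \Sigma_M$ we have $\Sigma_i \cap (\Sigma_M \cap \Sigma_0) = \Sigma_i \cap \Sigma_0$, so for the index set $\{1,\dots,n-1\}$ with observable alphabet $\Sigma_M \cap \Sigma_0$ the required local observers are exactly the given $P_{i,0}$, and the alphabet hypotheses are inherited; the induction hypothesis thus makes the projection $\Sigma_M^* \to (\Sigma_M \cap \Sigma_0)^*$ an $M$-observer. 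As $\Sigma_M \cap \Sigma_n = \bigcup_{i<n}(\Sigma_i \cap \Sigma_n) \subseteq \Sigma_0 \subseteq \Sigma_M \cup \Sigma_n$, applying the case $n = 2$ to $M$ and $L_n$ (with observers $\Sigma_M^* \to (\Sigma_M \cap \Sigma_0)^*$ and $P_{n,0}$) yields that $P_0$ is an $(M \parallel L_n)$-observer, that is, a $(\parallel_{i\in J} L_i)$-observer.

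For $n = 2$, put $L = L_1 \parallel L_2$, $\Sigma = \Sigma_1 \cup \Sigma_2$, let $P_i : \Sigma^* \to \Sigma_i^*$ and $Q_i : \Sigma_0^* \to (\Sigma_i \cap \Sigma_0)^*$ be the projections, write $R_i := P_{i,0}$, and note the commutation $R_i \circ P_i = Q_i \circ P_0$ (both sides are the natural projection of $\Sigma^*$ onto $(\Sigma_i \cap \Sigma_0)^*$). Given $t \in P_0(L)$ and $s \in \overline{L}$ with $P_0(s)$ a prefix of $t$, write $t = P_0(s)\,t'$ with $t' \in \Sigma_0^*$. From $s \in \overline{L}$ one gets $P_i(s) \in \overline{L_i}$; writing $t = P_0(x)$ with $x \in L$ gives $Q_i(t) = R_i(P_i(x)) \in R_i(L_i)$, and $R_i(P_i(s)) = Q_i(P_0(s))$ is a prefix of $Q_i(t)$. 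The $L_i$-observer property of $R_i$ then yields $u_i \in \Sigma_i^*$ with $P_i(s)\,u_i \in L_i$ and $R_i(P_i(s)\,u_i) = Q_i(t)$; cancelling the common prefix $Q_i(P_0(s))$ leaves $R_i(u_i) = Q_i(t')$ for $i = 1,2$.

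The technical core is then a merging claim: from $u_1 \in \Sigma_1^*$, $u_2 \in \Sigma_2^*$, $t' \in \Sigma_0^*$ with $R_i(u_i) = Q_i(t')$, and using $\Sigma_1 \cap \Sigma_2 \subseteq \Sigma_0 \subseteq \Sigma_1 \cup \Sigma_2$, one finds $u \in \Sigma^*$ with $P_1(u) = u_1$, $P_2(u) = u_2$, $P_0(u) = t'$. I would prove this by induction on $|t'|$, scanning $t'$ left to right. If $t' = \varepsilon$, then each $R_i(u_i) = \varepsilon$ forces $u_i \in (\Sigma_i \setminus \Sigma_0)^*$; since $\Sigma_1 \cap \Sigma_2 \subseteq \Sigma_0$, the sets $\Sigma_1 \setminus \Sigma_0$ and $\Sigma_2 \setminus \Sigma_0$ are disjoint from each other and from $\Sigma_0$, so $u = u_1 u_2$ has the required projections. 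If $t' = \sigma t''$ with $\sigma \in \Sigma_0$, then for each $i$ with $\sigma \in \Sigma_i$ the word $R_i(u_i) = Q_i(t')$ begins with $\sigma$, so $u_i$ factors as $u_i = w_i\,\sigma\,u_i'$ with $w_i \in (\Sigma_i \setminus \Sigma_0)^*$ the prefix up to the first $\Sigma_i \cap \Sigma_0$-event; recursing on $t''$ together with the shortened completions ($u_i'$ where $\sigma \in \Sigma_i$, $u_i$ otherwise) gives $u''$, and $u = w_1 w_2\,\sigma\,u''$ (with $w_i = \varepsilon$ when $\sigma \notin \Sigma_i$) works, the disjointness of $\Sigma_i \setminus \Sigma_0$ from $\Sigma_j$ for $i \ne j$ guaranteeing that $P_1, P_2, P_0$ return $u_1, u_2, t'$. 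Granting the merging claim, $P_i(su) = P_i(s)\,u_i \in L_i$ gives $su \in P_1^{-1}(L_1) \cap P_2^{-1}(L_2) = L$ and $P_0(su) = P_0(s)\,t' = t$, which is exactly the $L$-observer condition for $P_0$.

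The step I expect to be the main obstacle is the merging claim: the completions $u_1$ and $u_2$ are produced independently by the two observers, and one must check they are mutually compatible and can be interleaved into a single word of $L_1 \parallel L_2$ with the prescribed $P_0$-image. Both alphabet inclusions enter essentially here — $\Sigma_1 \cap \Sigma_2 \subseteq \Sigma_0$ is what makes private events of one component invisible to the other (so interleaving cannot corrupt a foreign projection) and forces $u_1,u_2$ to agree on their shared events through $t'$, while $\Sigma_0 \subseteq \Sigma_1 \cup \Sigma_2$ ensures every letter of $t'$ is accounted for by one of the two components. The remaining bookkeeping — the prefix cancellations in the $n=2$ step and the inheritance of the hypotheses in the inductive reduction — is routine.
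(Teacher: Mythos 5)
Your proof is correct. Note, however, that the paper itself gives no proof of this lemma: it is quoted verbatim from the cited reference and stated with an immediate end-of-proof mark, so there is no in-paper argument to compare against. Your write-up is therefore a self-contained derivation of a borrowed result. The structure is sound: the reduction to $n=2$ via associativity of $\parallel$ correctly verifies that the alphabet hypotheses are inherited (in particular $\Sigma_M\cap\Sigma_n=\bigcup_{i<n}(\Sigma_i\cap\Sigma_n)\subseteq\Sigma_0$), the commutation $P_{i,0}\circ P_i=Q_i\circ P_0$ is the right bridge between the global and local observer conditions, and the merging claim -- which is indeed where all the content lives -- goes through: the base case uses $\Sigma_1\cap\Sigma_2\subseteq\Sigma_0$ to make $\Sigma_1\setminus\Sigma_0$ and $\Sigma_2\setminus\Sigma_0$ mutually invisible, and the inductive step correctly peels off the leading $\Sigma_0$-event from each $u_i$ that sees it, using $\Sigma_0\subseteq\Sigma_1\cup\Sigma_2$ so that no letter of $t'$ is orphaned. (The statement's ``$\Sigma_0\subseteq(\cup_{i\in J}\Sigma_i)^*$'' is a typo for $\Sigma_0\subseteq\cup_{i\in J}\Sigma_i$, and you read it the intended way.) The only cosmetic remark is that the $n=1$ base case is unnecessary once $n=2$ is proved directly, but it does no harm.
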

  
  \begin{lem}\label{lem_trans}
    Let $K\subseteq L\subseteq M$ be languages such that $K$ is normal with respect to $L$ and $Q$, and $L$ is normal with respect to $M$ and $Q$. Then, $K$ is normal with respect to $M$ and $Q$.
  \end{lem}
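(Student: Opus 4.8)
The plan is to unfold the definition of normality and then chain the two hypotheses through the intermediate language $L$. Writing the normality conditions with prefix closures in place of the plant language, the hypotheses say $\overline{K} = Q^{-1}Q(\overline{K}) \cap \overline{L}$ and $\overline{L} = Q^{-1}Q(\overline{L}) \cap \overline{M}$, and the goal is to establish $\overline{K} = Q^{-1}Q(\overline{K}) \cap \overline{M}$. As usual, the proof splits into the two inclusions.

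The inclusion $\overline{K} \subseteq Q^{-1}Q(\overline{K}) \cap \overline{M}$ is immediate: every word is $Q$-equivalent to itself, so $\overline{K} \subseteq Q^{-1}Q(\overline{K})$, and $\overline{K} \subseteq \overline{L} \subseteq \overline{M}$ follows from $K \subseteq L \subseteq M$. For the reverse inclusion I would take an arbitrary $w \in Q^{-1}Q(\overline{K}) \cap \overline{M}$ and argue in two steps. First, from $w \in Q^{-1}Q(\overline{K})$ and $\overline{K} \subseteq \overline{L}$ one gets $w \in Q^{-1}Q(\overline{L})$; combining this with $w \in \overline{M}$ and applying normality of $L$ with respect to $M$ yields $w \in Q^{-1}Q(\overline{L}) \cap \overline{M} = \overline{L}$. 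Second, now that $w \in \overline{L}$, we have $w \in Q^{-1}Q(\overline{K}) \cap \overline{L} = \overline{K}$ by normality of $K$ with respect to $L$, which is exactly what we wanted.

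I do not expect any real obstacle here; the argument is purely set-theoretic and is the normality analogue of the transitivity-of-controllability result (Lemma~\ref{lem_transC}). The only points requiring a little care are to keep prefix closures straight throughout (normality is a condition on $\overline{K}$, not on $K$) and to use the chain $K \subseteq L \subseteq M$ precisely where it is needed, namely to push $\overline{K} \subseteq \overline{L}$ through the inverse projection $Q^{-1}Q(\cdot)$.
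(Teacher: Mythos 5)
Your proof is correct and follows essentially the same route as the paper's: first use normality of $L$ with respect to $M$ to conclude that $Q^{-1}Q(\overline{K})\cap\overline{M}\subseteq\overline{L}$, then apply normality of $K$ with respect to $L$. The paper phrases this as a chain of set identities while you argue element-wise, but the substance is identical.
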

  \begin{proof}
    $Q^{-1}Q(\overline{K})\cap \overline{L} = \overline{K}$ and $Q^{-1}Q(\overline{L})\cap \overline{M} = \overline{L}$, hence $Q^{-1}Q(\overline{K})\cap \overline{M} \subseteq Q^{-1}Q(\overline{L})\cap \overline{M} = \overline{L}$. It implies $Q^{-1}Q(\overline{K})\cap \overline{M} = Q^{-1}Q(\overline{K})\cap \overline{M} \cap \overline{L} = \overline{K} \cap \overline{M} = \overline{K}$.
  \end{proof}

  \begin{lem}\label{normalitaComposition}
    Let $K_1\subseteq L_1$ over $\Sigma_1$ and $K_2\subseteq L_2$ over $\Sigma_2$  be nonconflicting languages such that $K_1$ is normal with respect to $L_1$ and $Q_1:\Sigma_1^*\to \Sigma_{1,o}^*$ and $K_2$ is normal with respect to $L_2$ and $Q_2:\Sigma_2^*\to \Sigma_{2,o}^*$. Then $K_1\| K_2$ is normal with respect to $L_1\| L_2$ and $Q:(\Sigma_1\cup\Sigma_2)^*\to (\Sigma_{1,o}\cup\Sigma_{2,o})^*$.
  \end{lem}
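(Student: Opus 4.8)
The plan is to verify directly the defining equality of normality, namely $\overline{K_1\parallel K_2} = Q^{-1}Q(\overline{K_1\parallel K_2}) \cap (L_1\parallel L_2)$. Write $\Sigma=\Sigma_1\cup\Sigma_2$, $\Sigma_o=\Sigma_{1,o}\cup\Sigma_{2,o}$, and let $P_i\colon\Sigma^*\to\Sigma_i^*$ and $P_i^o\colon\Sigma_o^*\to\Sigma_{i,o}^*$ be the natural projections. The inclusion ``$\subseteq$'' is immediate: $K_1\parallel K_2\subseteq L_1\parallel L_2$ gives $\overline{K_1\parallel K_2}\subseteq L_1\parallel L_2$ (the right-hand side being prefix-closed since $L_1,L_2$ are, as plant languages $L(G_i)$), and $\overline{K_1\parallel K_2}\subseteq Q^{-1}Q(\overline{K_1\parallel K_2})$ holds trivially; so the content is in the reverse inclusion.

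The one auxiliary fact I would record first is the commutative-diagram identity $Q_i\circ P_i = P_i^o\circ Q$ for $i=1,2$, proved by inspecting a single event $a\in\Sigma$: both homomorphisms retain $a$ precisely when $a\in\Sigma_{i,o}$, which uses only $\Sigma_{i,o}\subseteq\Sigma_i$ and $\Sigma_{i,o}\subseteq\Sigma_o$. Now take $w\in Q^{-1}Q(\overline{K_1\parallel K_2})\cap(L_1\parallel L_2)$. From $w\in L_1\parallel L_2=P_1^{-1}(L_1)\cap P_2^{-1}(L_2)$ we get $P_i(w)\in L_i$. From $w\in Q^{-1}Q(\overline{K_1\parallel K_2})$ there is $w'\in\overline{K_1\parallel K_2}$ with $Q(w)=Q(w')$, and by nonconflictingness $\overline{K_1\parallel K_2}=\overline{K_1}\parallel\overline{K_2}$, so $P_i(w')\in\overline{K_i}$. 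The diagram identity then gives $Q_i(P_i(w))=P_i^o(Q(w))=P_i^o(Q(w'))=Q_i(P_i(w'))$, hence $P_i(w)\in Q_i^{-1}Q_i(\overline{K_i})$; together with $P_i(w)\in L_i$ and normality of $K_i$ with respect to $L_i$ and $Q_i$, this yields $P_i(w)\in\overline{K_i}$ for $i=1,2$. Therefore $w\in P_1^{-1}(\overline{K_1})\cap P_2^{-1}(\overline{K_2})=\overline{K_1}\parallel\overline{K_2}$, and one further use of nonconflictingness gives $w\in\overline{K_1\parallel K_2}$, which closes the argument.

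I do not anticipate a genuine obstacle: the only thing requiring care is the bookkeeping of prefix closures — in particular the two appeals to nonconflictingness, first to split $w'$ into its projections $P_i(w')\in\overline{K_i}$ and then to recombine the projections of $w$ into $\overline{K_1\parallel K_2}$, and the standing assumption that $L_1,L_2$ are prefix-closed so that $L_1\parallel L_2$ is prefix-closed and the normality equalities are the usual ones. If one wished to drop that assumption, one would replace $L_i$ by $\overline{L_i}$ throughout and the same proof goes through verbatim.
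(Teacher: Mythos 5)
Your proof is correct and is essentially the paper's own argument: the paper compresses it into the single chain $Q^{-1}Q(\overline{K_1\parallel K_2}) \cap L_1\parallel L_2 \subseteq Q_1^{-1}Q_1(\overline{K_1}) \parallel Q_2^{-1}Q_2(\overline{K_2}) \parallel L_1\parallel L_2 = \overline{K_1}\parallel\overline{K_2} = \overline{K_1\parallel K_2}$, which is exactly your elementwise reasoning (commutative diagram of projections, componentwise normality, then nonconflictingness) written as language inclusions. The only cosmetic remark is that your first appeal to nonconflictingness is not needed, since $\overline{K_1\parallel K_2}\subseteq\overline{K_1}\parallel\overline{K_2}$ holds unconditionally; nonconflictingness is genuinely used only in the final recombination step.
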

  \begin{proof}
    $Q^{-1}Q(\overline{K_1\parallel K_2}) \cap L_1\parallel L_2 \subseteq Q_1^{-1}Q_1(\overline{K_1}) \parallel Q_2^{-1}Q_2(\overline{K_2}) \parallel L_1\parallel L_2 = \overline{K_1} \parallel \overline{K_2} = \overline{K_1\parallel K_2}$. As the other inclusion always holds, the proof is complete.
  \end{proof}

\end{document}